\numberwithin{equation}{section}
\newtheorem{thm}{Theorem}[section]
\newtheorem{prop}[thm]{Proposition}
\newtheorem{lem}[thm]{Lemma}
\newtheorem{cor}[thm]{Corollary}
\newtheorem{rem}{Remark}[section]
\newtheorem{ex}{Example}[section]
\def\eps{\varepsilon}
\def\a{\alpha}
\def\b{\beta}
\def\de{\partial}
\def\d{\delta}
\def\g{\gamma}
\def\s{\sigma}
\def\de{\partial}
\newcommand{\ol}{\overline}
\newcommand{\Z}{{\mathbb Z}}
\newcommand{\R}{{\mathbb R}}
\newcommand{\Om}{\Omega}
\begin{document}
\date{\today}

\title[]{
Uniqueness of solutions in Mean Field Games\\with 
several populations and Neumann conditions
}
\author[
]{Martino Bardi 
 {and} Marco Cirant
 }
 \address{Department of Mathematics "T. Levi-Civita", University of Padova, Via Trieste 63, 35121 Padova, Italy} \email{bardi@math.unipd.it, cirant@math.unipd.it}
 \thanks{
 The authors are members of the Gruppo Nazionale per l'Analisi Matematica, la Probabilit\`a e le loro Applicazioni (GNAMPA) of the Istituto Nazionale di Alta Matematica (INdAM). They are partially supported by the research projects  "Mean-Field Games and Nonlinear PDEs" of the University of Padova and ``Nonlinear Partial Differential Equations: Asymptotic Problems and Mean-Field Games" of the Fondazione CaRiPaRo.}
%
\begin{abstract} We  study the uniqueness of solutions to systems of PDEs arising in Mean Field Games with several populations of agents and Neumann boundary conditions. The main assumption  requires the smallness of some data, e.g., the length of the time horizon. This complements the existence results for MFG models of segregation phenomena introduced by the authors and Achdou. An application to  robust Mean Field Games is also given.
\end{abstract}
\subjclass{}
\keywords{Mean Field Games, multi-populations, uniqueness, Neumann boundary conditions, robust Mean Field Games.}
\maketitle
\tableofcontents
\section{Introduction}

The systems of partial differential equations associated to finite-horizon Mean Field Games (briefly, MFGs) with $N$ populations of agents have the form
\begin{equation}
\label{mfg-syst-0}
\left\{
\begin{array}{ll}
- \partial_t v_k - 
 \Delta v_k + H_k(x,Dv_k)  = F_k(x, m(t,\cdot)) , & \textit{in }  (0,T)\times\Omega , \\ \\
\partial_t m_k - 
 \Delta m_k -  \text{div}(D_p H_k(x,Dv_k)m_k) = 0 & \textit{in } (0,T)\times\Omega, \\ \\
v_k(T,x) = G_k(x, m(T,\cdot)) , \; m_k(0,x) = m_{0,k}(x) & \textit{in } \Omega , \quad k=1,\dots,N ,
\end{array}
\right.
\end{equation}
where the unknown $m$ is a vector of probability densities on $\Omega$, $F_k$ and $G_k$ are function of this vector and represent the running and terminal costs of a representative agent of the $k$-population, and $v_k$ is the value function of this agent. The first $N$ equations are parabolic of Hamilton-Jacobi-Bellman type and backward in time with a terminal condition, the second $N$ equations are parabolic of Kolmogorov-Fokker-Planck  type and forward in time with an initial condition. If the state space $\Omega \subseteq \R^d$ is not all $\R^d$, boundary conditions must also be imposed. In most of the theory of MFGs they are periodic, which are the easiest to handle, here we will consider instead Neumann conditions, i.e., 
\begin{equation}
\label{Neumann}
\partial_n v_k = 0, \quad 
\partial_n m_k + m_k D_p H_k(x, Dv_k) \cdot n = 0  
\quad \text{on }  (0,T)\times\partial \Omega .
\end{equation}
There is a large literature on the existence of solutions for these equations, especially in the case of a single population $N=1$, beginning with the pioneering papers of Lasry and Lions \cite{LLcras1, LLcras, LL}  and Huang, Caines and Malhame \cite{HCM:06, HCM:07ieee, HCM:07jssc}
, see the lecture notes \cite{Car, GLL}, the books \cite{GomesBook, CaDe}, the survey \cite{GS}, and the references therein.  
 Systems with several populations, $N>1$, were treated with Neumann conditions in \cite{Cir, CirVe} for the stationary case and in \cite{ABC} in the evolutive case, with periodic conditions in \cite{BFel, CarPoTo}. 

Uniqueness of solutions is a much more delicate issue. For one population  Lasry and Lions \cite{LLcras1, LLcras, LL}  discovered a monotonicity condition on the costs $F$ and $G$ that together with the 
 convexity in $p$ of the Hamiltonian  $H(x,p)$ implies the uniqueness of classical solutions. It reads
   \begin{equation}
   \label{monoF}
  \int_\R (F(x,\mu) - F(x,\nu)) d(\mu -\nu)(x) ) > 0 , \quad \text{if } \; 
  \mu\ne 
  \nu 
  \end{equation}
and it means that a representative agent prefers the regions of the state space that are less crowded. This is a restrictive condition that is satisfied in some models and not in others. 
When it fails, non-uniqueness may arise: this was first observed in the stationary case by Lasry and Lions \cite{LL} and other counterexamples were shown by \cite{Gu, B, BP, GNPraz}. The need of a condition such as \eqref{monoF} for having uniqueness for finite-horizon MFGs 
 was discussed at length in \cite{L}, and some explicit examples of non-uniqueness appeared very recently in \cite{BriCar}, \cite{CT}, and in  \cite{BF}  that presents also a probabilistic proof and references on other examples obtained by the probabilistic approach.
 
 For multi-population problems, $N>1$, there are extensions of the monotonicity condition \eqref{monoF} in \cite{Cir, BF} and they are even more restrictive: they impose not only aversion to crowd within each population, but also that the costs due to this effect dominate the costs due to the interactions with the other populations. This is not the case in the multi-population models of segregation in urban settlements proposed in \cite{ABC} following the ideas of the Nobel Prize Thomas Schelling \cite{Sch}. There the interactions between two different populations are the main cause of the dynamics, and in fact examples of multiple solutions were shown in \cite{ABC} and \cite{CirVe} for the stationary case and in \cite{BF} for the evolutive one. Therefore a different criterion giving uniqueness in some cases is particularly desirable when $N>1$.

A second regime for uniqueness was introduced in a lecture of P.L. Lions on January 9th, 2009 
\cite{L}: it occurs if the length $T$ of the time horizon is short enough. To our knowledge Lions' original argument did not appear 
 in print. For finite state MFGs, uniqueness for short time was proved by Gomes, Mohr, and Souza \cite{GMS} as part of their study of the large population limit. 
 For continuous state, an existence and uniqueness result under a ''small data'' condition was given 
  in \cite{HCM:07jssc} for Linear-Quadratic-Gaussian MFGs using a contraction mapping argument to solve the associated system of Riccati differential equations, and similar arguments were used for different classes of linear-quadratic problems 
  in \cite{wangzhang12, moonbasar16}. 
The well-posedness when $H(x, Dv) -F(x,m)$ is replaced by $\eps \mathcal{H}(x, Dv, m)$ with $\eps$ small is studied in \cite{Amb}, and another result for small Hamiltonian 
  is in \cite{T} for nonconvex $H$.

Very recently the first author and Fischer  \cite{BF}  revived Lions' argument to show that the smoothness of the Hamiltonian is the crucial property to have small-time uniqueness without monotonicity of the costs and convexity of $H$, and gave an example of non-uniqueness for all $T>0$ and $H(x,p)=|p|$. The uniqueness theorem for small data in \cite{BF} holds for $N=1$ and $\Omega=\R^d$ with conditions on the behaviour of the solutions at infinity. 

In the present paper we focus instead on $N\geq 1$ and Neumann boundary conditions, which is  the setting of the MFG models of segregation in \cite{ABC}. The 
new difficulties arise 
 from the boundary conditions, that require different methods for some estimates, especially on the $L^\infty$ norm of the densities $m_k$. Our first uniqueness result assumes a suitable smoothness of the Hamiltonians $H_k$, but neither convexity nor growth conditions, and that the costs $F_k, G_k$ are Lipschitz in $L^2$ with respect to the measure $m$, with no monotonicity.
The smallness condition on the data depends on the range of the spacial gradient of the solutions $v_k$, unless $D_pH_k$ are bounded and globally Lipschitz for all $k$. Then we complement such result with some a priori gradient estimates on $v_k$, under an additional  quadratic growth condition on $H_k$ and some more regularity of the costs, 
and get a $\bar T>0$ depending only on the data such that there is uniqueness for all horizons $T\leq\bar T$. Finally, we 
 give sufficient conditions ensuring both existence and uniqueness for the system \eqref{mfg-syst-0} with the boundary conditions \eqref{Neumann}, as well as for some robust MFGs considered in \cite{BBT, moonbasar16}, which are interesting examples with nonconvex Hamiltonian.

We mention that in the stationary case, uniqueness up to (space) translation may hold without \eqref{monoF} in force. A special class of MFG on $\R^d$ enjoying such a feature has been identified in \cite{Cir16}.

The paper is organised as follows. Section \ref{uniq} contains the main result about uniqueness for small data, possibly depending on gradient bounds on the solutions. Section \ref{appl} gives further sufficient conditions depending only on the data for uniqueness and existence of solutions. The Appendix \ref{app} recalls  a comparison principle for HJB equations with Neumann conditions. 

\section{The uniqueness theorem}\label{uniq}
Consider the MFG system for $N$ populations
\begin{equation}\label{mfg-syst}
\left\{
\begin{array}{ll}
- \partial_t v_k - 
 \Delta v_k + H_k(x,Dv_k)  = F_k(x, m(t,\cdot)) , & \textit{in }  (0,T)\times\Omega , \\ \\
\partial_t m_k - 
 \Delta m_k -  \text{div}(D_p H_k(x,Dv_k)m_k) = 0 & \textit{in } (0,T)\times\Omega, \\ \\
\partial_n v_k = 0, \; 
\partial_n m_k + m_k D_p H_k(x, Dv_k) \cdot n = 0  & \textit{on }  (0,T)\times\partial \Omega , \\ \\
v_k(T,x) = G_k(x, m(T,\cdot)) , \; m_k(0,x) = m_{0,k}(x) & \textit{in } \Omega
\end{array}
\right.
\end{equation}
where $k= 1,\dots, N$, $Dv_k
$ denotes the gradient of the $k-th$ component $v_k$ of  the unknown $v$ with respect to the space variables, $\Delta$ is the Laplacian with respect to the space variables $x$,  $D_pH_k
$ is the gradient of the Hamiltonian of the $k$-th population with respect to the moment variable, $\Omega\subseteq\R^d$ is a bounded open set with 
 boundary $\de \Omega$ of class $C^{2,\beta}$ for dome $\beta>0$, and $n(x)$ is its exterior normal at $x$.  
The components $m_k$ of the unknown vector $m$ are bounded densities of probability measures on $\Omega$, i.e., 
 $m$ lives in
\[
{ \mathcal{P}_N}(\Omega
) := \left\{\mu=(\mu_1,\dots,\mu_N)\in L^\infty(\Omega
)^N : \mu_k\geq 0,\; \int_\Omega \mu_k(x) dx = 1 
\right\}.
\]
$F$ and $G$ represent, respectively, the running and terminal cost of the MFG 
 $$
 F :  \overline\Omega\times \mathcal{P}_N(\Omega)\to \R^N , \quad G :  \overline\Omega\times \mathcal{P}_N(\Omega)\to \R^N.
 $$
By classical solutions we will mean functions of $(t,x)$ of class $C^1$ in $t$ and $C^2$ in $x$ in $[0,T]\times \ol \Omega$.

\subsection{The main result} 

Our main assumptions are the smoothness of the Hamiltonians and a 
Lipschitz continuity of the costs in the norm $\|\cdot\|_2$ of $L^2(\Omega
)^N$ that we state next.
We consider $H_k : \ol\Om\times\R^d \to \R$ continuous and satisfying
\begin{equation}
\label{ass:H}
 D_pH_k(x,p) \text{ is continuous and locally Lipschitz in $p$ uniformly in }
 x\in\ol\Om .
\end{equation}

We will assume $F, G$ 
 satisfy, for all $\mu, \nu$, 
\begin{equation}
\label{Flip}
\|F(\cdot,\mu)-F(\cdot,\nu)\|_2^2 \leq L_F \|\mu-\nu\|_2^2 , 
\end{equation}
\begin{equation}\label{Glip}
\|DG(\cdot,\mu)-DG(\cdot,\nu)\|_2^2 \leq L_{
G} \|\mu-\nu\|_2^2 ,
\end{equation}

%
\begin{thm}
\label{st-uniq} 
Assume \eqref{ass:H}, 
 \eqref{Flip}, \eqref{Glip}, $m_0\in { \mathcal{P}_N}
(\Omega)$, and $(\tilde v, \tilde m), (\ol v, \ol m)$ are two classical solutions of \eqref{mfg-syst}. 
Denote
\[
\mathcal{C}:= \text{\rm co}\{ D\tilde v(t,x), D\ol v(t,x) : (t,x)\in (0,T)\times\Omega \},
\]
\begin{equation}
\label{ass:Hv1} 
C_H:= \max_{k= 1,\dots, N} \sup_{
x\in\Omega,\, p\in \mathcal{C}} |D_pH_k(x,p)| 
 , 
 \end{equation}
 \begin{equation}
\label{ass:Hv2} \bar C_H := \max_{k= 1,\dots, N} \sup_{x\in\Omega,\, p, q\in \mathcal{C}} \frac{|D_pH_k(x,p)-D_pH_k(x,q)|}{|p-q|}  
\end{equation}
%
%

\noindent
Then there exists a function $\Psi$ of $T, L_F, L_G, C_H, \bar C_H, N
$ and $\max_k\| m_{0,k}\|_\infty$ (depending also on $\Omega$), such that the inequality $\Psi<1$ implies $\tilde v(t,\cdot) = \ol v(t,\cdot)$ and $\tilde m(t,\cdot) = \ol m(t,\cdot)$ for all $t\in[0,T]$, 
 and $\Psi<1$ holds if either $T$, or $\bar C_H$, or the pair $L_F, L_G$ is small enough.

 \end{thm}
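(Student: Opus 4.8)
The plan is to adapt the classical Lasry--Lions duality computation to the multi-population Neumann setting, but \emph{without} using monotonicity: instead of concluding that a certain cross-integral is negative, we estimate it and absorb it using smallness of the data. Write $v:=\tilde v-\ol v$, $m:=\tilde m-\ol m$, $f:=F(\cdot,\tilde m)-F(\cdot,\ol m)$, and $g:=G(\cdot,\tilde m(T,\cdot))-G(\cdot,\ol m(T,\cdot))$. Subtracting the two HJB equations and the two Kolmogorov equations for each $k$, multiplying the HJB difference by $m_k=\tilde m_k-\ol m_k$ and the Kolmogorov difference by $v_k$, integrating over $\Omega$ and then over $(0,T)$, and summing over $k$, the goal is the standard identity
\begin{equation}\label{eq:duality}
\sum_k \int_\Omega g_k\, m_k(T)\,dx + \int_0^T\!\!\int_\Omega \sum_k f_k\, m_k \,dx\,dt
= -\int_0^T\!\!\int_\Omega \sum_k m_k\big(H_k(x,D\tilde v_k)-H_k(x,D\ol v_k)-D_pH_k(x,D\tilde v_k)\cdot Dv_k\big)\,dx\,dt,
\end{equation}
where the boundary terms coming from the two integrations by parts cancel precisely because of the coupled Neumann conditions \eqref{Neumann} (this is the place where the Neumann structure is used, and checking the cancellation carefully is one of the technical points). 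Unlike the convex monotone case one cannot sign either side; instead one observes that the right-hand integrand is controlled by $\bar C_H|Dv_k|^2$ by the mean-value / local-Lipschitz bound \eqref{ass:H}--\eqref{ass:Hv2} along the segment in $\mathcal C$, while the left-hand side is controlled by $\|f\|_2$, $\|g\|_2$ times $\|m\|_2$, hence by $L_F,L_G$ via \eqref{Flip}--\eqref{Glip}.

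The next step is to turn \eqref{eq:duality} into a closed estimate for the two quantities $\sup_t\|m(t,\cdot)\|_2$ and $\int_0^T\|Dv(t,\cdot)\|_2^2\,dt$ (plus, as an auxiliary, $\|v(T,\cdot)\|$ or $\sup_t\|v(t,\cdot)\|$). For the gradient term one tests the HJB difference by $v_k$ itself: multiplying $-\partial_t v_k-\Delta v_k + H_k(x,D\tilde v_k)-H_k(x,D\ol v_k)=f_k$ by $v_k$, integrating by parts (again the Neumann condition $\partial_n v_k=0$ kills the boundary term), and using $|H_k(x,D\tilde v_k)-H_k(x,D\ol v_k)|\le C_H|Dv_k|$ on $\mathcal C$, one gets via Young's inequality and Gronwall an estimate of the form
\begin{equation}\label{eq:gradest}
\sup_{t\in[0,T]}\|v(t,\cdot)\|_2^2 + \int_0^T\|Dv(t,\cdot)\|_2^2\,dt \;\le\; C(T,C_H,N)\Big(\|g\|_2^2 + \int_0^T\|f(t,\cdot)\|_2^2\,dt\Big) \;\le\; C(T,C_H,N)\,L_*\int_0^T\|m(t,\cdot)\|_2^2\,dt ,
\end{equation}
with $L_*$ built from $L_F,L_G$ (the $G$-term requires controlling $\|v(T,\cdot)\|$ by $\|g\|$; using \eqref{Glip} one actually controls $\|Dg\|_2$, and a Poincaré-type inequality on $\Omega$ together with mass conservation $\int_\Omega m_k=1$, i.e.\ $\int_\Omega(\tilde m_k-\ol m_k)=0$, allows one to replace $\|g\|$ by $\|Dg\|$). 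Symmetrically, testing the Kolmogorov difference by $v_k$ and by $m_k$ and using the divergence structure plus the $L^\infty$ bound on $m_{0,k}$ to control $\|m_k\|_\infty$ (here the Neumann-boundary heat-kernel / duality estimates replace the periodic ones, which the introduction flags as the genuinely new difficulty), one bounds $\sup_t\|m(t,\cdot)\|_2^2$ by $C(T,C_H,\bar C_H,N,\max_k\|m_{0,k}\|_\infty)\int_0^T\|Dv(t,\cdot)\|_2^2\,dt$.

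Finally, one closes the loop: combining the two one-sided estimates gives
\[
\int_0^T\|Dv(t,\cdot)\|_2^2\,dt \;\le\; \Psi\cdot \int_0^T\|Dv(t,\cdot)\|_2^2\,dt,
\]
where $\Psi=\Psi(T,L_F,L_G,C_H,\bar C_H,N,\max_k\|m_{0,k}\|_\infty,\Omega)$ is the product of the constants accumulated above; by inspection $\Psi\to0$ as $T\to0$, as $L_F,L_G\to0$, and as $\bar C_H\to0$ (the last because the nonlinear remainder in \eqref{eq:duality} is the only term carrying $\bar C_H$, and when it vanishes the $H_k$ are affine in $p$ and the Kolmogorov equations become linear, forcing $m=0$ directly from \eqref{eq:gradest} with small $L_F,L_G$ — one has to be a little careful to isolate exactly which constant degenerates, so that each of the three stated smallness regimes independently yields $\Psi<1$). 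When $\Psi<1$ this forces $Dv\equiv0$, hence by \eqref{eq:gradest} $v\equiv0$ and $f\equiv0$, and then the Kolmogorov system for $m_k$ is a homogeneous linear parabolic problem with zero initial data and Neumann conditions, so $m\equiv0$; therefore $\tilde v=\ol v$ and $\tilde m=\ol m$. I expect the main obstacle to be precisely the $L^\infty$-in-space control of the density differences $m_k$ under Neumann boundary conditions with the drift $D_pH_k(x,Dv_k)$ — the estimate that in the periodic case is routine but here, as the authors emphasize, needs a different argument (parabolic regularity up to the boundary, or a Moser/De Giorgi iteration, or a representation via the Neumann semigroup with a bounded drift), and getting its constant to depend only on the listed data.
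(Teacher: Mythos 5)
Your overall architecture --- a two-sided loop $\int_0^T\|Dv\|_2^2\,dt \lesssim \int_0^T\|m\|_2^2\,dt \lesssim \bar C_H^2\,\mathcal M^2\int_0^T\|Dv\|_2^2\,dt$ closed by smallness, with $\mathcal M$ an $L^\infty$ bound on the densities under Neumann conditions --- is exactly the paper's, and you correctly isolate the Neumann $L^\infty$ estimate on $m_k$ as the key new ingredient (it is Proposition \ref{prop:est}, proved by duality against the backward Neumann heat equation following \cite[Section 5]{GomesBook}; in your write-up it remains an assertion). However, there is a concrete gap in your energy estimate for $v$. You test the difference of the HJB equations by $v_k$ itself, which produces the terminal term $\tfrac12\|v_k(T,\cdot)\|_2^2=\tfrac12\|G_k(\cdot,\tilde m(T))-G_k(\cdot,\ol m(T))\|_2^2$, and the hypothesis \eqref{Glip} controls only $\|DG(\cdot,\mu)-DG(\cdot,\nu)\|_2$, not $\|G(\cdot,\mu)-G(\cdot,\nu)\|_2$. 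Your proposed repair via Poincar\'e plus $\int_\Omega(\tilde m_k-\ol m_k)=0$ does not work here: mass conservation of $m$ lets you subtract the mean of $g_k$ only inside the pairing $\int_\Omega g_k\,m_k(T)\,dx$ appearing in the duality identity, but it says nothing about the mean of $g_k$ itself, which is what you would need to bound the free-standing norm $\|g\|_2$. Since your loop is closed through this energy estimate (not through the duality identity), the argument does not close under the stated hypotheses. The paper's fix is to test by $\Delta v_k$ instead of $v_k$: integrating $-\partial_t v_k\,\Delta v_k$ by parts (the Neumann condition kills the boundary term) produces $\tfrac12\|Dv_k(T,\cdot)\|_2^2=\tfrac12\|D\bar G\|_2^2$, which is exactly what \eqref{Glip} controls, and yields the pointwise-in-time bound \eqref{est_1}.

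Two secondary remarks. First, your Lasry--Lions duality identity is never actually used to close the argument --- the estimates you combine come from testing the HJB difference by $v_k$ and the KFP difference by $m_k$, so the identity is a detour --- and as written it is also not the correct identity: the nonlinear remainder in the standard computation pairs the Bregman-type expressions with the individual densities $\tilde m_k$ and $\ol m_k$ separately, not with the difference $m_k=\tilde m_k-\ol m_k$. Second, in the final step the paper recovers $v\equiv0$ from $Dv\equiv0$ and $m\equiv0$ via the comparison principle for the Neumann HJB problem (Appendix \ref{app}); your alternative of rerunning the energy estimate with $f=g=0$ is legitimate once the estimate itself is repaired as above. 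A further small technical point worth making explicit is the use of a measurable mean-value theorem (Lemma \ref{mvt}) to write the differences $H_k(x,D\tilde v_k)-H_k(x,D\ol v_k)$ and $D_pH_k(x,D\tilde v_k)-D_pH_k(x,D\ol v_k)$ in linear form with coefficients bounded by $C_H$ and $\bar C_H$ on the convex hull $\mathcal C$; \eqref{ass:H} only gives local Lipschitz continuity, so this step is needed to justify the constants you use.
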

 %
For the the proof we need two auxiliary results.
 
  \begin{prop}\label{prop:est}
  \noindent 
 There are constants $r > 1$ and $C > 0$ depending only on $d$ and $\Omega$ such that
 \begin{equation}
 \label{est_m}
 \|m_k\|_{L^{\infty}((0,T)\times\Omega)} \leq C [1+ \| m_{0,k}\|_\infty + (1+T)\|D_p H_k(\cdot,Dv_k) \|_{L^\infty((0,T)\times \Omega)}]^r,  \quad k=1, \ldots, N.
 \end{equation}
 \end{prop}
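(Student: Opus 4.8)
The plan is to treat the Kolmogorov--Fokker--Planck equation in \eqref{mfg-syst} as a \emph{linear} parabolic equation for $m_k$ with bounded drift $b:=D_pH_k(\cdot,Dv_k)$ (bounded because $(v_k,m_k)$ is classical and $D_pH_k$ continuous), written in divergence form $\partial_t m_k=\Delta m_k+\mathrm{div}(b\,m_k)$ with the \emph{conservative} Neumann condition $\partial_n m_k+(b\cdot n)m_k=0$, and to run a parabolic $L^p$-bootstrap starting from the conserved mass $\int_\Omega m_k(t,\cdot)\,dx=1$. First I would record mass conservation (test against $1$, use the boundary condition) and derive a mild (Duhamel) formulation: letting $S(\tau)=e^{\tau\Delta_N}$ be the Neumann heat semigroup of $\Omega$, and testing the equation against $\psi(s,\cdot):=S(t-s)\phi$ for $\phi\in C^\infty(\ol\Omega)$, integration by parts twice in $x$ makes the two boundary integrals combine into $\int_{\partial\Omega}(S(t-s)\phi)\big(\partial_n m_k+(b\cdot n)m_k\big)=0$ — here one uses \emph{both} that $\partial_n S(t-s)\phi=0$ and the conservative boundary condition — so that, after integrating in $s$ and using $\partial_s\psi=-\Delta\psi$,
\[
m_k(t)=S(t)\,m_{0,k}+\int_0^t S(t-s)\,\mathrm{div}\big(b(s)\,m_k(s)\big)\,ds
\]
holds distributionally, the divergence term being read off via $\langle S(\tau)\,\mathrm{div}\,F,\phi\rangle=-\langle F,\nabla S(\tau)\phi\rangle$ (no boundary term, again since $\partial_nS(\tau)\phi=0$).

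Next I would invoke the $L^p$--$L^q$ smoothing of the Neumann semigroup on the $C^{2,\beta}$ domain $\Omega$: the Neumann heat kernel of $\Omega$ satisfies Gaussian bounds together with gradient bounds, whence $\|\nabla S(\tau)g\|_{L^q(\Omega)}\le C(d,\Omega)\,(\tau\wedge1)^{-\frac12-\frac d2(\frac1p-\frac1q)}\|g\|_{L^p(\Omega)}$ for $1\le p\le q\le\infty$, and by duality $\|S(\tau)\,\mathrm{div}\,F\|_{L^q(\Omega)}\le C(d,\Omega)\,(\tau\wedge1)^{-\frac12-\frac d2(\frac1p-\frac1q)}\|F\|_{L^p(\Omega)}$ whenever $\frac1p-\frac1q<\frac1d$; also $S(\tau)$ is an $L^p$-contraction for every $p$. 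I would then iterate along $q_0=1$, $\tfrac1{q_{j+1}}=\tfrac1{q_j}-\tfrac1{2d}$ (so the exponent above equals $\tfrac34<1$), stopping at the first $J\le 2d$ with $q_J=\infty$ (once $q_j>d$ one jumps directly to $\infty$). Plugging the Duhamel identity into $\|\cdot\|_{L^{q_{j+1}}}$, bounding $\|b(s)m_k(s)\|_{L^{q_j}}\le \|D_pH_k(\cdot,Dv_k)\|_{L^\infty((0,T)\times\Omega)}\,\|m_k(s)\|_{L^{q_j}}$, $\|S(t)m_{0,k}\|_{L^{q_{j+1}}}\le\|m_{0,k}\|_{L^{q_{j+1}}}\le 1+\|m_{0,k}\|_\infty$ (interpolation between $L^1$ and $L^\infty$, using $\|m_{0,k}\|_1=1$), and $\int_0^t(t-s)^{-3/4}ds\le C(1+T)$, gives
\[
\sup_{[0,T]}\|m_k(t,\cdot)\|_{L^{q_{j+1}}}\le A\Big(1+\sup_{[0,T]}\|m_k(t,\cdot)\|_{L^{q_j}}\Big),\qquad A:=C\big(1+\|m_{0,k}\|_\infty+(1+T)\|D_pH_k(\cdot,Dv_k)\|_{L^\infty((0,T)\times\Omega)}\big),
\]
with $C=C(d,\Omega)\ge1$. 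Chaining these $J$ inequalities from $\sup_{[0,T]}\|m_k\|_{L^1}=1$ produces $\|m_k\|_{L^\infty((0,T)\times\Omega)}\le (J+1)A^{J}$, i.e. \eqref{est_m} with $r=r(d)>1$ and $C=C(d,\Omega)$.

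The main obstacle is the boundary. One must check carefully that the \emph{conservative} form of the Neumann condition is exactly what makes the boundary integrals cancel at every step (in the derivation of Duhamel above, and — in the alternative route below — in the energy identity), so that no surviving boundary term can spoil the iteration; and one must have at hand the Gaussian and gradient estimates for the Neumann heat kernel on a $C^{2,\beta}$ bounded domain. In the interior this is the textbook parabolic $L^p$-bootstrap. A De Giorgi alternative to the semigroup route handles $\partial_t m_k=\mathrm{div}(\nabla m_k+b\,m_k)$ with zero normal flux, fed by the $L^\infty_tL^2_x$ and $L^2_tH^1_x$ bounds coming from $\frac{d}{dt}\tfrac12\|m_k\|_2^2=-\|\nabla m_k\|_2^2-\int_\Omega b\,m_k\cdot\nabla m_k$ (boundary term vanishing by the same cancellation), a Gagliardo--Nirenberg/Nash inequality, and $\|m_k\|_1=1$; either route yields the stated polynomial dependence on the data.
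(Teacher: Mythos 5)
Your argument is correct in outline, but it takes a genuinely different route from the paper. You bound $m_k$ by a forward-in-time Duhamel/mild formulation with the Neumann heat semigroup and an $L^{q_j}\to L^{q_{j+1}}$ bootstrap, whose engine is the $L^p$--$L^q$ smoothing \emph{with gradient bounds} of the Neumann heat kernel on a bounded $C^{2,\beta}$ domain; that kernel estimate (Gaussian upper bounds plus $|\nabla_x p_\tau(x,y)|\lesssim \tau^{-(d+1)/2}e^{-c|x-y|^2/\tau}$ up to the boundary, uniformly for large $\tau$ after subtracting the mean) is standard but is precisely the piece of machinery you would need to cite carefully, and it is the one real hypothesis of your proof beyond the conservative boundary condition, which you do handle correctly in the derivation of the mild formulation. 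The paper instead argues by duality against the \emph{adjoint} (backward) heat equation: for an arbitrary unit-mass Neumann heat flow $\varphi$ it proves the space-time integrated bound $\|\nabla\varphi\|_{L^q((0,t)\times\Omega)}\le \overline C(1+t)^{1/q}$ for $q<(d+2)/(d+1)$ by an elementary entropy/Sobolev computation (testing with $\alpha\varphi^{\alpha-1}$), then pairs $m_k$ with $\varphi$ to get $\|m_k(t,\cdot)\|_\infty\le\|m_{0,k}\|_\infty+\overline C(1+t)^{1/q}\|D_pH_k\|_\infty\|m_k\|_{L^{q'}((0,t)\times\Omega)}$, and closes in a single step via the interpolation $\|m_k\|_{L^{q'}}\le\|m_k\|_\infty^{1/q}t^{1/q'}$ (using mass one) and absorption, yielding $r=q'$. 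The trade-off: your semigroup (or De Giorgi) bootstrap is the textbook interior scheme and gives an explicit exponent $r=r(d)$ from the iteration count, but imports pointwise heat-kernel gradient bounds on a domain with boundary; the paper's duality argument is softer and self-contained, needing only energy estimates and the Sobolev embedding, which is why it adapts painlessly to the Neumann setting. Both yield the stated polynomial bound, so your proposal stands as a valid alternative proof provided you supply a reference for the Neumann kernel estimates.
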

 \begin{proof} {\em Step 1.} We aim at proving that for any $q \in [1, (d+2)/(d+1))$ there exists a constant $\overline C$ depending only on $d, q$ and $\Omega$ such that any positive classical solution $\varphi$ of the backward heat equation
 \[
 \begin{cases}
 -\partial_t \varphi - \Delta \varphi  = 0 & \text{on $(0,t) \times \Omega$} \\
 \partial_n \varphi = 0 & \text{on $(0,t) \times \partial \Omega$} \\
 \int_\Omega \varphi(t,x) dx = 1,
 \end{cases}
 \]
satisfies
\[
\| \nabla \varphi \|_{L^q((0,t)\times \Omega)} \le \overline C (1+t)^{1/q}.
\]
We follow the strategy presented in \cite[Section 5]{GomesBook}. Note first that $\int_\Omega \varphi(s,x) dx = 1$ for all $s \in (0,t)$, by integrating by parts the equation and using the boundary conditions. We proceed in the case $d \ge 3$; if $d = 1$ or $d=2$, one argues in a similar way (see the discussion below). Let $\alpha  \in (0,1)$ to be chosen later; multiplying the equation by $\alpha \varphi^{\alpha-1}$ and integrating by parts yield for all $s \in (0,t)$
\[
\int_\Omega |\nabla \varphi^{\alpha/2}(s,x)|^2 dx = \frac{\alpha}{4(\alpha-1)}\partial_t \int_\Omega \varphi^\alpha(s,x) dx.
\]
Integrating in time and using the fact that $\int_\Omega \varphi(s,x) dx = 1$ give
\begin{equation}\label{est_grad}
\int_0^t \int_\Omega |\nabla \varphi^{\alpha/2}|^2 dx ds = \frac{\alpha}{4(1 - \alpha)} \int_\Omega \varphi^\alpha(0,x) dx - \frac{\alpha}{4(1 - \alpha)} \int_\Omega \varphi^\alpha(t,x) dx \le c_1,
\end{equation}
where $c_1$ depends on $d$ and $\Omega$ (the positive constants $c_2, c_3, \ldots$ used in the sequel will have the same dependance).

 We now exploit the continuous embedding of $W^{1,2}(\Omega)$ into $L^{\frac{2d}{d-2}}(\Omega)$; the adaption of this proof to the cases $d=1,2$ is straightforward, as the injection of $W^{1,2}(\Omega)$ is into $L^{p}(\Omega)$ for all $p \ge 1$. Hence, for all $s \in (0,t)$, by H\"older and Sobolev inequalities
 \begin{multline*}
 \int_\Omega \varphi^{\alpha + \frac{2}{d}}(s,x) dx \le \left(\int_\Omega \varphi(s,x) dx\right)^{\frac{2}{d}} \left(\int_\Omega \varphi^{\frac{\alpha}{2}\frac{2d}{d-2}}(s,x) dx\right)^{\frac{d-2}{d}} \\ \le c_2\left(\int_\Omega |\nabla \varphi^{\alpha/2}|^2 dx + \int_\Omega \varphi^\alpha dx \right)  \le  c_2\left(\int_\Omega |\nabla \varphi^{\alpha/2}|^2 dx + 1 +|\Omega| \right),
 \end{multline*}
 so
\begin{equation}\label{est_phi}
\int_0^t \int_\Omega \varphi^{\alpha + \frac{2}{d}} dx ds \le c_3 \left(\int_0^t \int_\Omega |\nabla \varphi^{\alpha/2}|^2 dx ds + t\right).
\end{equation}

Finally, since $q < (d+2)/(d+1)$, we may choose $\alpha \in (0,1)$ such that
\[
q\frac{2-\alpha}{2-q} = \alpha + \frac{2}{d},
\]
and therefore, by the identity $\nabla \varphi^{\alpha/2} = \frac{\alpha}{2} \varphi^{\frac{\alpha-2}{2}}\nabla \varphi$ and Young's inequality
 \begin{multline*}
\int_0^t \int_\Omega |\nabla \varphi|^q dx ds = \left(\frac{2}{\alpha}\right)^q \int_0^t \int_\Omega |\nabla \varphi^{\alpha/2}|^q \, \varphi^{q\frac{2-\alpha}{2}} dx ds \le \\ c_4 \left(\int_0^t \int_\Omega |\nabla \varphi^{\alpha/2}|^2 dx ds + \int_0^t \int_\Omega \varphi^{q\frac{2-\alpha}{2-q}} dx ds \right) \le c_4(c_1 + c_3(c_1 + t))),
\end{multline*}
in view of \eqref{est_grad} and \eqref{est_phi}, and the desired estimate follows.
 
 {\em Step 2.} Fix $t \in (0,T)$ and $1 < q < (d+2)/(d+1)$. Let $\varphi_0$ be any non-negative smooth function on $\Omega$ such that $\partial_n \varphi_0 = 0$ on $\partial \Omega$ and $ \int_\Omega \varphi_0(x) dx = 1$. Let $\varphi$ be the solution of the backward heat equation
  \[
 \begin{cases}
 -\partial_t \varphi - \Delta \varphi  = 0 & \text{on $(0,t) \times \Omega$} \\
 \partial_n \varphi = 0 & \text{on $(0,t) \times \partial \Omega$} \\
 \varphi(0,x) = \varphi_0(x) & \text{on $\Omega$.}
 \end{cases}
 \]
 Note that $\varphi$ is positive on $(0,t) \times \Omega$ by the strong maximum principle. Multiply the KFP equation in \eqref{mfg-syst}, integrate by parts and use the boundary conditions for $m_k$ to get
 \[
\int_0^t \int_\Omega  \partial_t m_k \, \varphi + \nabla m_k \cdot \nabla \varphi +D_p H_k(x,Dv_k) \cdot \nabla \varphi \, m_k \, dxds = 0.
 \]
 Integrating again by parts (in space-time) yields
  \[
\int_\Omega  m_k(t,x) \varphi_0(x) = \int_\Omega  m_k(0,x) \varphi(0,x) -  \int_0^t \int_\Omega D_p H_k(x,Dv_k) \cdot \nabla \varphi \, m_k \, dxds,
 \]
 using the equation and the boundary condition for $\varphi$. Hence,
 \begin{multline*}
\int_\Omega  m_k(t,x) \varphi_0(x) \le \| m_{k,0} \|_\infty + \|D_p H_k(\cdot,Dv_k)\|_{L^\infty((0,t) \times \Omega)} \int_0^t \int_\Omega  |\nabla \varphi| \, |m_k| \, dxds,\\
\le  \| m_{k,0} \|_\infty +  \overline C (1+t)^{1/q} \|D_p H_k(\cdot,Dv_k)\|_{L^\infty((0,t) \times \Omega)}  \|m_k\|_{L^{q'}((0,t) \times \Omega)} 
\end{multline*}
by Step 1. By the arbitrariness of $\varphi_0$, one obtains
\[
\| m_{k}(t, \cdot) \|_\infty \le \| m_{k,0} \|_\infty +  \overline C (1+t)^{1/q} \|D_p H_k(\cdot,Dv_k)\|_{L^\infty((0,t) \times \Omega)}  \|m_k\|_{L^{q'}((0,t) \times \Omega)} ,
\]
and since
\[
 \|m_k\|_{L^{q'}((0,t)\times\Omega)} \le \left( \int_0^t  \| m_{k}(s, \cdot) \|_\infty^{q'-1} \int_\Omega m_k(s,x) dx \, ds\right)^{1/q'} \le  \|m_k\|_{L^{\infty}((0,t)\times\Omega)}^{1/q}\, t^{1/q'},
\]
we have
\[
\| m_{k}(t, \cdot) \|_\infty \le \| m_{k,0} \|_\infty +  \overline C (1+t) \|D_p H_k(\cdot,Dv_k)\|_{L^\infty((0,t) \times \Omega)}  \|m_k\|_{L^{\infty}((0,t)\times\Omega)}^{1/q}.
\]
Passing to the supremum on $t \in (0,T)$, we conclude ($r$ in the statement can be chosen to be $q'$).

\end{proof}
   \begin{lem}[A mean-value theorem]
   \label{mvt}
    Let $\mathcal{K}\subseteq\R^d$, $f : \ol\Omega\times  \mathcal{K} \to \R^d$ be continuous and Lipschitz continuous in the second entry with constant $L$, uniformly in the first. Then there exists a measurable matrix-valued function $M(\cdot, \cdot,\cdot)$ such that 
  \begin{equation}
 \label{meanv}
   f(x,p) - f(x,q) = M(x,p,q)(p-q) , \quad |M(x,p,q)| \leq L 
    , \quad\forall \, x\in \ol\Omega, \,p,q\in \mathcal{K}.
 \end{equation}
   \end{lem}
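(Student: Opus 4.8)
The plan is to exhibit an explicit rank-one matrix for $M$, which has the advantage of requiring neither differentiability of $f$ nor convexity of $\mathcal K$; in particular it avoids the segment-average formula $M(x,p,q)=\int_0^1 D_pf(x,q+t(p-q))\,dt$, which would need $f$ to be $C^1$ in $p$ and the segment joining $q$ to $p$ to lie in $\mathcal K$. First, on the set $\{(x,p,q):p\ne q\}$ I would set
\[
M(x,p,q):=\frac{\bigl(f(x,p)-f(x,q)\bigr)\otimes(p-q)}{|p-q|^2},
\]
i.e.\ the matrix with entries $M_{ij}=\bigl(f_i(x,p)-f_i(x,q)\bigr)(p_j-q_j)/|p-q|^2$. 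Applying $M(x,p,q)$ to the vector $p-q$ and using $(p-q)\cdot(p-q)=|p-q|^2$ immediately gives the identity \eqref{meanv}.

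Next I would estimate the norm. For any $\xi\in\R^d$ one has
\[
M(x,p,q)\xi=\bigl(f(x,p)-f(x,q)\bigr)\,\frac{(p-q)\cdot\xi}{|p-q|^2},
\]
so by Cauchy--Schwarz and the Lipschitz hypothesis $|M(x,p,q)\xi|\le \frac{|f(x,p)-f(x,q)|}{|p-q|}\,|\xi|\le L|\xi|$; hence $|M(x,p,q)|\le L$ in the operator norm (and, $M$ being rank one, in the Frobenius norm as well).

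Finally, on the diagonal $\{p=q\}$ I would set $M(x,p,p):=0$: then \eqref{meanv} holds trivially there and the bound is immediate. For measurability, note that $\{(x,p,q)\in\ol\Omega\times\mathcal K\times\mathcal K:p\ne q\}$ is (relatively) open and $M$ is continuous on it since $f$ is continuous, while on the complementary closed set $M\equiv 0$; since the domain is thus partitioned into two Borel sets on each of which $M$ is continuous, $M$ is Borel measurable. I do not foresee any genuine obstacle here: the only point to keep in mind is to discard the integral/Jacobian representation and use the rank-one one, precisely because $f$ is assumed merely Lipschitz (not $C^1$) in $p$ and $\mathcal K$ need not be convex.
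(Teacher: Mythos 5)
Your proof is correct, and it takes a genuinely different route from the paper. The paper mollifies $f$ in $p$, applies the classical integral mean-value formula to the smooth approximations $f_n$ to get matrices $M_n(x,p,q)=\int_0^1 D_pf_n(x,q+s(p-q))\,ds$ with $|M_n|\le L$, and then defines $M_{ij}:=\liminf_n (M_n)_{ij}$ to retain measurability in the limit. Your rank-one construction
\[
M(x,p,q)=\frac{\bigl(f(x,p)-f(x,q)\bigr)\otimes(p-q)}{|p-q|^2}
\]
verifies the identity and the bound $|M|\le L$ by direct computation, and measurability is immediate from continuity off the diagonal. What your approach buys: it is entirely elementary, it needs no convexity of $\mathcal{K}$ (the paper's segment-average formula implicitly requires the segment $[q,p]$ to lie in a neighbourhood of $\mathcal{K}$, which is harmless in the application since the lemma is applied with $\mathcal{K}=\mathcal{C}$ a convex hull, but is not guaranteed by the statement as written), and it sidesteps the delicate point of passing to the limit in the identity: taking $\liminf$ entrywise does not obviously commute with the sum $\sum_j (M_n)_{ij}(p_j-q_j)$, since the entrywise $\liminf$s need not be attained along a common subsequence, whereas your formula requires no limiting argument at all. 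What the paper's approach buys is a matrix arising as a limit of averaged Jacobians, which would inherit structural properties of $D_pf$ (symmetry, sign conditions) if one needed them; but the theorem only ever uses the identity and the bound, for which your construction is fully sufficient.
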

 \begin{proof} Mollify $f$ in the variables $p$ and get a sequence $f_n$ converging to $f$ locally uniformly and with Jacobian matrix satisfying $\|D_pf_n\|_\infty\leq L$.
 Since $f_n$ is $C^1$ in $p$ the standard mean-value theorem gives
 \begin{equation}
 \label{meanv_n}
 f_n(x,p) - f_n(x,q) = \int_0^1 Df_n(x, q+s(p-q)) (p-q) \, ds =: M_n(x,p,q)(p-q) ,
 \end{equation}
 and $|M_n|\leq L$. We define the matrix $M$ componentwise by setting
 \[
 M(x,p,q)_{ij} := \liminf_n M_n(x,p,q)_{ij} , \quad i, j= 1,\dots, d,
 \]
 so that it is measurable in $(x,p,q)$ and satisfies $|M(x,p,q)| \leq L$. Now we take the $\liminf_n$ in the $i$-th component of the identity \eqref{meanv_n} and get the $i$-th component of the desired identity \eqref{meanv}.
\end{proof}
 \begin{proof} [Proof of Thm. \ref{st-uniq}.]  {\em Step 1.} First observe that, by the regularity of the solutions, $C_H<+\infty$ and $\bar C_H<+\infty$. We set
 \[ 
 v:=\tilde v - \ol v , \quad m:=\tilde m - \ol m, \quad
B_k(t,x):=\int_0^1D_pH_k(x, D\ol v(t,x)+s(D\tilde v-D\ol v)(t,x)) ds
\]
 and observe that $|B_k|\leq C_H$ 
 for all $k$ and $v_k$ satisfies
 \begin{equation}\label{Beq}
\left\{
\begin{array}{lll} -\partial_tv_k+B_k(t,x)\cdot Dv_k=\Delta v_k+F_k(x,\tilde m(t))-F_k(x,\ol m(t))
\quad 
 \text{ in  }(0, T)\times \Omega 
 \\ \\
\partial_n v_k = 0 \;  \text{ on }  (0,T)\times\partial \Omega , \quad  v_k(T,x)=G_k(x, \tilde m(T)) - G_k(x, \ol m(T)) .
  \end{array}
\right.\,
 \end{equation}

\noindent {\em Step 2.}
By the divergence theorem and the boundary conditions we compute
 \begin{multline*}
- \int_t^T\int_{\Omega}\partial_t v_k \Delta v_k \, ds =  \int_t^T\frac d{dt}\int_{\Omega}\frac{|Dv_k|^2}{2} dx ds - \int_t^T\int_{\de \Omega} \de_tv_k Dv_k\cdot n\, d\s \\ =
\frac 12 \|Dv_k(T,\cdot)\|_2^2 - \frac 12\|Dv_k(t,\cdot)\|_2^2 .
 \end{multline*}
Now we set
$$
\bar F(t,x):=F(x,\ol m)-F(x,\tilde m) , \quad  \bar G(t,x):=G(x,\ol m)-G(x,\tilde m) , 
$$
multiply the PDE in \eqref{Beq} by $\Delta v_k$, integrate,  use the terminal condition in  \eqref{Beq} and estimate
 \begin{multline*}
\frac 12\|Dv_k(t,\cdot)\|_2^2 + \int_t^T
\|\Delta v_k(s,\cdot)\|_2^2 \leq 
\frac 12 \|D\bar G(T,\cdot)\|_2^2 +
\\
\|B_k\|_\infty\int_t^T \left(\frac 1{2\eps}\|Dv_k(s,\cdot)\|_2^2 + \frac \eps 2 \|\Delta v_k(s,\cdot)\|_2^2\right) ds 
 + 
\int_t^T   \left(\frac 1{2\eps} \|\bar F(s,\cdot)\|_2^2
+ \frac \eps 2 \left\|\Delta v_k(s,\cdot)\right\|_2^2\right)  ds .
 \end{multline*}
Next we choose $\eps$ such that $1=(\|B_k\|_\infty+1)\eps/2$ and use 
the assumptions \eqref{Glip} and \eqref{Flip} to get
 \[
\|Dv_k(t,\cdot)\|_2^2 \leq 
L_{G} \|m
(T,\cdot)
\|_2^2 + \int_t^T  \frac {L_{F}}{\eps} \|m(s,\cdot)\|_2^2 ds +  \frac {\|B_k\|_\infty}{\eps}\int_t^T\|Dv_k(s,\cdot)\|_2^2 ds .
\]
Then Gronwall inequality gives, for $c_o:=
(\|B_k\|_\infty +1)/2=1
/\eps$ and for all $0\leq t\leq T$,
%
 \begin{equation}
 \label{est_1}
\|Dv_k(t,\cdot)\|_2^2 \leq \left(
L_{G} \|m(T,\cdot)\|_2^2 + c_o
L_F\int_t^T   \|m(s,\cdot)\|_2^2  ds\right)e^{ c_o
\|B_k\|_\infty T} .
 \end{equation}
 
 
\noindent {\em Step 3.} In order to write a PDE solved by $m$ we apply Lemma \ref{mvt} to $D_pH_k : \ol\Omega \times \mathcal{C} \to \R^d$, which is Lipschitz in $p$ by the 
 assumption in \eqref{ass:Hv2},  and get a matrix $M_k$ such that
\[
D_pH_k(x,D\ol v_k)-D_pH_k(x,D\tilde v_k) = M_k(x, D\ol v_k, D\tilde v_k)( D\ol v_k-D\tilde v_k),
\]
with $|M_k|\leq \bar C_H$. Now define 
\[
\tilde B_k(t,x):=D_pH_k(x,D\ol v_k) ,\quad A_k(t,x):= \tilde m_k
 M(x, D\ol v_k
 , D\tilde v_k
 ), \quad \tilde F_k(t,x):=A_k(t,x)(D\ol v_k-D\tilde v_k).
\]
Then  $m_k$ satisfies 
 \begin{equation}\label{m-linear}
\left\{
\begin{array}{lll}
\de_t m_k - \text{div}\left(
\tilde B_k 
m_k\right) = \Delta m_k +  \text{div}\tilde F_k 
  \quad 
\text{ in  }(0, T)\times \R^d, \\ \\
\de _n m_k + (m_k \tilde B_k +    \tilde F_k)\cdot n = 0   \;  \text{ on }  (0,T)\times\partial \Omega,\quad      m_k(0,x)= 0 .
 \end{array}
\right.\,
\end{equation}
with $|\tilde B_k|\leq C_H$ and $|A_k|\leq \mathcal{M} \bar C_H$ by the assumption \eqref{ass:Hv2}, where
\[
\mathcal{M}:=\max_k C [1+ \| m_{0,k}\|_\infty + (1+T)\|D_p H_k(\cdot,Dv_k) \|_{L^\infty((0,T)\times \Omega)}]^r
\]  
is the upper bound on $m_k$ given by  Proposition \ref{prop:est} (
where $C$ depends only on the set $\Omega$).

 \noindent {\em Step 4.}
We multiply the PDE in \eqref{m-linear} by $m_k$ and integrate by parts to get
 \begin{multline*}
0 =  \int_0^t\frac d{dt}\int_{\Omega}\frac{m_k^2}{2} \,dx ds +
\int_0^t\int_{\Omega}  |Dm_k|^2   \,dx ds - \int_0^t\int_{\de \Omega}  m_k Dm_k\cdot n \,d\s ds \\
 + \int_0^t\int_{\Omega}  m_k\tilde B_k\cdot Dm_k  \,dx ds - \int_0^t\int_{\de \Omega} m_k^2 \tilde B_k\cdot n \,d\s ds\\
 + \int_0^t\int_{\Omega}   \tilde F_k \cdot Dm_k  \,dx ds
- \int_0^t\int_{\de \Omega}  m_k \tilde F_k\cdot n  \,d\s ds.
 \end{multline*}
By the initial and boundary conditions in \eqref{m-linear} we obtain
 \begin{multline*}
\frac 12\|m_k(t,\cdot)\|_2^2 + \int_0^t\|Dm_k(s,\cdot)\|^2_2   \,ds = -\int_0^t\int_{\Omega}  \left(m_k\tilde B_k +  \tilde F_k\right)\cdot Dm_k  \,dx ds
\leq \\
\frac 1{2\eps} \int_0^t   \|\tilde F_k(s,\cdot)\|_2^2 ds +  \frac {\|\tilde B_k\|_\infty}{2\eps}\int_0^t\|m_k(s,\cdot)\|_2^2 ds + \eps\frac {\|\tilde B_k\|_\infty+1}{2} \int_0^t\|Dm_k(s,\cdot)\|^2_2   \,ds ,
 \end{multline*}
and with the choice $\eps=2/(\|\tilde B\|_\infty+1)=:1/c_1$
\[
 \|m_k(t,\cdot)\|_2^2 \leq  c_1
  \int_0^t   \|\tilde F_k(s,\cdot)\|_2^2 ds +  c_1
  \|\tilde B_k\|_\infty
  \int_0^t\|m(s,\cdot)\|_2^2 ds .
\]
Then Gronwall inequality and the definition of $\tilde F_k$ give, for all $0\leq t\leq T$,
 \begin{equation}
 \label{est_2}
\|m_k(t,\cdot)\|_2^2 \leq   
c_1 e^{c_1\|\tilde B_k\|_\infty T} \|A_k\|^2_\infty  \int_0^t   \| Dv_k(s,\cdot)\|_2^2 ds.
 \end{equation}

 \noindent {\em Step 5.} Now we set 
 $$\phi(t):= \|Dv(t,\cdot)\|_2^2=\sum_{k=1}^N\|Dv_k(t,\cdot)\|_2^2$$
  and 
 assume w.l.o.g. $C_H\geq1$, so that $c_o, c_1\leq C_H$.
 By combining \eqref{est_1} and \eqref{est_2} we get
  \begin{multline*}
\phi(t) \leq N e^{C_H^2 T} \left(
L_{G} \|m(T,\cdot)\|_2^2 + C_H
L_F\int_t^T   \|m(s,\cdot)\|_2^2  ds\right) \\
 \leq
\bar C_H ^2 C\left(L_G \int_0^T \phi(s) ds + C_H
L_F \int_t^T \int_0^\tau  \phi(s) ds \,d\tau \right) , \quad C:= N C_H e^{C_H^4 T^2} \mathcal{M}^2  .
 \end{multline*}
Then $\Phi := \sup_{0\leq t\leq T} \phi(t)$ satisfies
\[
\Phi\leq \Phi \Psi, \quad \Psi:= T\bar C_H^2 C(L_G+L_FC_HT/2),
\]
which implies $ \Phi=0$ if $\Psi < 1$. Therefore under such condition we conclude that $D\tilde v_k(t,x)=D\ol v_k(t,x)$ for all $k$, $x$ and $0\leq t\leq T$. By the uniqueness of solution for the KFP equation (e.g., Thm. I.2.2, p. 15 of \cite{LSU}) we deduce $\tilde m= \ol m$ and then, by the 
Comparison Principle for the HJB equation in the Appendix, $\tilde v= \ol v$.

 Finally, it is clear  that $\Psi$ can be made less than 1 by choosing either $T$, 
  or $\bar C_H$, or both $L_G$ and $L_F$ small enough.
 \end{proof} 
\subsection{Examples and remarks}

\begin{ex} 
\label{regul}
Integral costs. \upshape
Consider $F_k$ and $G_k$ of the form
\[
F_k(x, \mu)= F_o\left(x, \int_{\Omega} K(x,y) \mu(y) dy\right) , \quad G_k(x, \mu)= 
g_1(x)\int_{\Omega} \bar K(x,y) \cdot \mu(y) dy +g_2(x)
\]
with $F_o 
 : \ol\Omega
 \times \R^N\to \R$ measurable and Lipschitz in the second variable uniformly in the first, whereas   $K$ is an $N\times N$ matrix 
with components in   $L^2( \Omega\times\Omega)
$. Then 
 $F_k$ satisfies  \eqref{Flip}. 
 About $G_k$ we assume $g_1, g_2\in C^1(\Omega)$, $Dg_1$ bounded, the vector $\bar K$ and its Jacobian $D_x\bar K$ with components in $ L^2( \Omega \times \Omega)$. Then it
 satisfies
 \eqref{Glip}. Of course all the data $F_o, K, \bar K, g_i$ are allowed to change with the index $k=1,\dots, N$.
\end{ex} 
 \begin{ex} Local costs. 
 \label{local}  \upshape
 Take $G_k=G_k(x)$ independent of $m(T)$ and $F_k$ of the form
$ F_k(x,\mu)=F_k^l(x,\mu(x))
 $ 
 with $F_k^l: \ol\Omega
 \times [0,+\infty)^N\to\R$ measurable and Lipschitz in the second variable uniformly in the first.
  Then $F_k$ satisfies \eqref{Flip}. 
\end{ex}
\begin{ex}
Costs depending on the moments. \upshape
The mean value of the density $\mu$, $M(\mu)=\int_{\Omega} y\mu(y) dy$,  and all its moments 
$\int_{\Omega} y^j\mu(y) dy$, $j= 2, 3, \dots$,
are Lipschitz in $L^2$ 
by Example \ref{regul}. 
Then any $F_k$ (resp., $G_k$) depending on $\mu$ only via these quantities 
 satisfies \eqref{Flip} (resp., \eqref{Glip}) if it is Lipschitz with respect to them uniformly with respect to $x$.
\end{ex}
 \begin{ex} Convex Hamiltonians. 
 \label{Bell}  \upshape The usual Hamiltonians in MFGs are those arising from classical Calculus of Variations, e.g., $H _k(x,p)= b_k(x)(c_k + |p|^2)^{\b_k/2}$, which satisfies the assumption \eqref{ass:H} if $b_k\in C(\ol\Omega)$ and either $c_k>0$ or $c_k=0$ and $\b_k\geq 2$. 
 
 A related class of Hamiltonians are those of Bellman type associated to nonlinear systems, affine in the control $\a\in \R^d$,
 \begin{equation}
 \label{bellman}
 H _k(x,p):=\sup_\a\{ -(f_k(x)+g_k(x)\a)\cdot p -L_k(x,\a)\} = -f_k(x)\cdot p + L_k^*\left(x, -g_k(x)^Tp\right) ,
 \end{equation}
 where $f_k$ is a Lipschitz vector field, $g_k$ a Lipschitz square matrix, $L_k(x,\a)$ is the running cost of using the control $\a$ (adding to $F_k(x,m)$ in the cost functional of a representative player), and $L_k^*(x, \cdot)$ is its convex conjugate with respect to $\a$. In this case one can check the assumption \eqref{ass:H} on an explicit expression of $L_k^*$. For instance, if $L_k(x,\a)=|\a|^\g/\g$ then
 \[
 H _k(x,p)= -f_k(x)\cdot p + \frac{\g -1}{\g}|g_k(x)^Tp|^{\g/(\g -1)} ,
 \]
 which satisfies \eqref{ass:H} if $\g\leq 2$.
 \end{ex}
 \begin{ex} Nonconvex Hamiltonians. 
 \label{Isa} \upshape
Two-person 0-sum differential games give rise to the Isaacs Hamiltonians, which are defined in a way similar to \eqref{bellman} but as the  inf-sup over two sets of controls. A motivation for considering these Hamiltonians in MFGs is proposed in \cite{T}.  
A relevant
 example 
  is the case of robust control, or nonlinear $H_\infty$ control, studied in connection with MFGs by \cite{BBT, moonbasar16} (see also the references therein). In this class of problems a deterministic disturbance $\s(x) \b$ affects the control system ($\s$ is a Lipschitz square matrix) and a worst case analysis is performed by assuming that $\b\in \R^d$ is the control of an adversary who wishes to maximise the cost functional of the representative agent; a term $-\d |\b|^2/2$, with $\d>0$ is added to the running cost to penalise the energy of the disturbance. The Hamiltonian for robust control then becomes
 \begin{equation}
 \label{isaacs}
 H_k^{(r)}(x,p) := H _k(x,p) + \inf_{\beta}\left\{-\s(x) \b + \d |\b|^2/2\right\} \cdot p = H _k(x,p) - \frac{|\s(x)^Tp|^2}{2\d} ,
  \end{equation}
  which is the sum of the convex $H _k$ of the previous example and a concave function of $p$. Clearly it satisfies the condition 
   \eqref{ass:H} if and only if $H_k$ does. 
  \end{ex}
 \begin{rem} \label{contdip}
 Continuous dependence on data. \upshape
Our proof of uniqueness can be adapted to show the Lipschitz dependence of solutions on some data. For instance, in Theorem \ref{st-uniq} we may assume that $\bar m(0,x)=\bar m_0(x)$ and $\tilde m(0,x)=\tilde m_0(x)$, with $\bar m_0, \tilde m_0 \in { \mathcal{P}_N}(\Omega)$. Then a simple variant of the proof allows to estimate 
$$
\|\tilde m(t,\cdot) - \bar m(t,\cdot)\|_2^2 \leq \frac C\delta \|\tilde m_0 - \bar m_0\|_2^2
$$
where $0<\d \leq 1-\Psi$ and $C$ depends on the same quantities as $\Psi$. A similar estimate holds for  $\|D\tilde v(t,\cdot) - D\bar v(t,\cdot)\|_2^2$. Under some further assumptions on the costs $F$ and $G$ one can also use results on the HJB equation to obtain the continuous dependence of $v$ itself upon the initial data $m_0$. More precise results on continuous dependence of solutions with respect to data will be given elsewhere.
  \end{rem}
\begin{rem}
\label{plions}
\upshape
The statement of Theorem \ref{st-uniq} holds with the same  proof for solutions $\Z^d$-periodic in the space variable $x$ in the case that $F_k$ and $G_k$ are $\Z^d$-periodic in $x$ and without Neumann boundary conditions. 
In such case of periodic boundary conditions a uniqueness result for short $T$ was presented by Lions in \cite{L} for $N=1$, regularizing running cost $F$, and for terminal cost $G$ independent of $m(T)$. He used estimates in $L^1$ norm for $m$ and in $L^\infty$ norm for $Dv$, instead of the $L^2$ norms we used here in \eqref{est_1} and \eqref{est_2}. See also \cite {BF} for the case of a single population.
\end{rem}
\begin{rem}\upshape
The constants $C_H$ and $\bar C_H$ in the theorem depend only on the data of \eqref{mfg-syst} if $H_k$ and $D_pH_k$ are globally  Lipschitz in $p$, uniformly in $x$, for all $k$.
In this case the smallness condition $\Psi <1$ 
does not depend on the solutions $\tilde v, \ol v$. In the next section we reach the same conclusion for much more general Hamiltonians $H_k$ under some mild additional conditions on the costs $F_k, G_k$.
\end{rem}
\begin{rem}\upshape
If the volatility is different among the populations the terms $\Delta v_k, \Delta m_k$ in \eqref {mfg-syst} are replaced, respectively, by $\nu_k\Delta v_k$ and $\nu_k \Delta m_k$. If the constants $\nu_k$ are all positive, the theorem remains true with the function $\Psi$ now depending also on $\nu_1,\dots, \nu_N$ and minor changes in the proof. The case of volatility depending on $x$ leads to  operators of the form  $\text{trace} (\sigma_k(x)\sigma^T_k(x)D^2v_k)$ in the the HJB equations and their adjoints in the KFP equations. This can also be treated, with some additional work in the proof, if such operators are uniformly elliptic, i.e., the minimal eigenvalue of the matrix $\sigma_k(x)\sigma^T_k(x)$ is bounded away from 0 for $x\in\ol \Omega$.
\end{rem}
\begin{rem}\upshape
The $C^{2,\beta}$ regularity of $\de\Omega$ can be weakened in Theorem \ref{st-uniq}. Here we used, e.g., Theorem IV.5.3 of \cite{LSU} to produce a smooth test function $\varphi$ in the proof of Proposition \ref{prop:est}. However, we could work instead with a weak solution of the backward heat equation, which exists,  for instance, if $\de\Omega\in C^{1,\beta}$ by Theorem 6.49 of \cite{Lieb}, or if it is "piecewise smooth" by Theorem III.5.1 in \cite{LSU}.
\end{rem}
%
%
\section{Special cases and applications}\label{appl}
The function $\Psi$ of Theorem \ref{st-uniq} may depend on the solutions $\tilde v, \ol v$ if the Hamiltonians $H_k$ are not globally Lipschitz or they have unbounded second derivatives, because the constants $C_H, \bar C_H$ may depend on the range of $D\tilde v$ and $D \ol v$. Under some further assumptions we can estimates these quantities and therefore get a uniqueness result where the function $\Psi$ depends only on the data of the problem \eqref{mfg-syst}. The additional assumptions are
\begin{equation}
\label{FGbdd}
|F_k(x,\mu)|
 \leq C_F , \quad |G_k(x,\mu)| \leq C_G , \quad \forall \, x\in\ol\Omega, \mu\in{ \mathcal{P}_N}(\Omega) , k=1,\dots,N,
\end{equation}
\begin{equation}
\label{growth}
|H_k(x,p)| \leq \a (1+|p|^2) , \quad |D_pH_k(x,p)|(1+|p|)\leq \a (1+|p|^2) ,\quad \forall \, x
, p
, k,
\end{equation}
and $x\to G(x,\mu)$ of class $C^2$, for all $\mu\in\mathcal{P}_N(\Omega)$, with
\begin{equation}
\label{G'bdd} 
\|DG_k(\cdot,\mu)\|_\infty + \|D^2 G_k(\cdot,\mu)\|_\infty \leq  C'_G , \forall \, k .
\end{equation}
\begin{cor}
\label{cor1}
Assume \eqref{ass:H}, 
 \eqref{Flip}, \eqref{Glip}, $m_0\in { \mathcal{P}_N}(\Omega)$, \eqref{FGbdd},  \eqref{growth}, and  \eqref{G'bdd}.
\noindent
Then there exists 
$\ol T>0$
such that for all $T\in (0, \bar T]$ can be at most one 
classical solution of \eqref{mfg-syst}.
\end{cor}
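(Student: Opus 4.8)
The plan is to deduce Corollary \ref{cor1} from Theorem \ref{st-uniq} by showing that, under the additional hypotheses \eqref{FGbdd}, \eqref{growth}, \eqref{G'bdd}, the set $\mathcal{C}$ (hence the constants $C_H$ and $\bar C_H$) can be controlled \emph{a priori}, independently of the particular classical solution, by a quantity depending only on the data. Once $C_H, \bar C_H$ are bounded by data-dependent constants, and since $\mathcal{M}$ in Step 3 of the proof of Theorem \ref{st-uniq} is itself bounded once $\|D_pH_k(\cdot,Dv_k)\|_\infty \le C_H$, the function $\Psi = T\bar C_H^2 C(L_G + L_F C_H T/2)$ becomes a function of $T$ and the data alone; choosing $\bar T$ so small that $\Psi < 1$ for all $T \le \bar T$ finishes the argument.

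So the real content is an \emph{a priori gradient estimate}: for any classical solution $(v,m)$ of \eqref{mfg-syst} one must bound $\|Dv_k\|_{L^\infty((0,T)\times\ol\Omega)}$ by a constant depending only on $\alpha$ (from \eqref{growth}), $C_F$, $C_G$, $C'_G$, $\Omega$, $d$, and an upper bound on $T$ (say $T \le 1$). I would obtain this in two stages. First, a global bound on $\|v_k\|_\infty$: by the comparison principle for the HJB equation with Neumann conditions recalled in the Appendix, comparing $v_k$ with the functions $\pm((C_F + \alpha)(T-t) + C_G)$ — or with a barrier absorbing the $\alpha(1+|p|^2)$ term via a change of variables of Hopf--Cole type if needed — gives $\|v_k\|_{L^\infty} \le C_0$ with $C_0$ depending only on $C_F, C_G, \alpha, T$. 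Second, the gradient bound: here one uses the quadratic growth $|H_k(x,p)| \le \alpha(1+|p|^2)$ together with the $C^2$ bound on the terminal datum $G_k$ from \eqref{G'bdd} and the $C^{2,\beta}$ regularity of $\partial\Omega$. The standard route is a Bernstein-type argument: differentiate the equation, test against $|Dv_k|^2$ times a cutoff, and use that the Neumann condition $\partial_n v_k = 0$ on a $C^{2,\beta}$ boundary lets one control the boundary terms (invoking the bound on $D^2 G_k$ at the final time to start the estimate). Alternatively, one can quote an existing interior-plus-boundary gradient estimate for quasilinear parabolic equations with conormal/Neumann data (e.g.\ from \cite{LSU} or \cite{Lieb}) applied to the equation $-\partial_t v_k - \Delta v_k = F_k - H_k(x,Dv_k)$, whose right-hand side has quadratic growth in $Dv_k$ and bounded zero-order part; the hypotheses \eqref{FGbdd}, \eqref{growth}, \eqref{G'bdd}, and the $\|v_k\|_\infty$ bound are exactly what such a theorem requires. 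Either way the output is $\|Dv_k\|_{L^\infty((0,T)\times\ol\Omega)} \le C_1$ with $C_1$ depending only on the data and on $T \le 1$.

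Given the gradient bound, $\mathcal{C} \subseteq \overline{B_{C_1}(0)} =: \mathcal{C}_0$, and by the local Lipschitz continuity of $D_pH_k$ in $p$ (assumption \eqref{ass:H}) the quantities
\[
C_H^0 := \max_k \sup_{x\in\Omega,\ p\in\mathcal{C}_0} |D_pH_k(x,p)|, \qquad
\bar C_H^0 := \max_k \sup_{x\in\Omega,\ p,q\in\mathcal{C}_0,\ p\ne q} \frac{|D_pH_k(x,p)-D_pH_k(x,q)|}{|p-q|}
\]
are finite and depend only on the data; we have $C_H \le C_H^0$ and $\bar C_H \le \bar C_H^0$ for any pair of solutions. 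Plugging these into Theorem \ref{st-uniq}, the function $\Psi$ is bounded above by a function $\bar\Psi(T)$ of $T$ and the data which, by its explicit form, tends to $0$ as $T \to 0^+$; hence there is $\bar T > 0$ (depending only on the data and $\Omega$) with $\bar\Psi(T) < 1$ for all $T \in (0,\bar T]$, and Theorem \ref{st-uniq} yields uniqueness on such horizons.

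The main obstacle I anticipate is the \emph{a priori gradient bound up to the boundary}: interior Bernstein estimates for quadratic-growth Hamiltonians are classical, but one must verify carefully that the homogeneous Neumann condition $\partial_n v_k = 0$ on the $C^{2,\beta}$ boundary, combined only with the $C^2$-control of the terminal datum from \eqref{G'bdd}, suffices to close the estimate near $\partial\Omega$ without extra convexity or structure on $H_k$ — in particular handling the term arising from differentiating $\partial_n v_k = 0$ tangentially, which brings in the second fundamental form of $\partial\Omega$ and requires the $C^{2}$ regularity of the boundary. If a direct Bernstein argument is delicate, the cleanest alternative is to cite a boundary gradient estimate for conormal-type parabolic problems from \cite{LSU} or \cite{Lieb}, reducing this step to checking that the structure conditions \eqref{FGbdd}--\eqref{G'bdd} match their hypotheses. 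Everything else is bookkeeping: tracking that each constant produced depends only on $d$, $\Omega$, $\alpha$, $C_F$, $C_G$, $C'_G$, $L_F$, $L_G$, $N$, $\max_k\|m_{0,k}\|_\infty$, and the bound $T\le 1$, so that the resulting $\bar T$ is genuinely data-dependent.
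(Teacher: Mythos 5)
Your proposal is correct and follows essentially the same route as the paper: a sup-norm bound on $v_k$ via the comparison principle with the barriers $\pm(C_G + t(C_F+\alpha))$, then an a priori gradient bound for the quadratic-growth HJB equation with Neumann data (the paper simply invokes Theorem V.7.2 of \cite{LSU}, which is exactly your ``cleanest alternative''), and finally substitution of the resulting data-dependent bounds for $C_H$ and $\bar C_H$ into Theorem \ref{st-uniq}.
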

\begin{proof}
By Assumption \eqref{FGbdd} the functions $\pm (C_G+t(C_F+\alpha))$ are, respectively, a super- and a subsolution of the HJB equation in \eqref{mfg-syst} with homogeneous Neumann condition and terminal condition $G_k$, for any $k$ and $m$. Then the Comparison Principle in the Appendix gives for any solution of \eqref{mfg-syst} the estimate
\[
|v_k(t,x)| \leq C_G+TC_F ,\quad \forall\, (t,x)\in [0,T]\times \ol\Omega , k=1,\dots,N .
\]
Now we can use an estimate of Theorem V.7.2, p. 486 of \cite{LSU}, stating that there is a constant $K$, depending only on $\max|v_k|, \a, C'_G$, and $\de \Omega$, such that
\[
|Dv_k(t,x)| \leq K ,\quad \forall\, (t,x)\in [0,T]\times \ol\Omega , k=1,\dots,N .
\]
Then the constant $C_H$ in 
\eqref{ass:Hv1} is bounded by 
$C'_H:=\a(1+K^2)/(1+K)$, and $\bar  C_H$ defined by \eqref{ass:Hv2} can be estimated by 
\[
 \bar C'_H := \max_{k= 1,\dots, N} \sup_{x\in\Omega,\, |p|, |q|\leq K} \frac{|D_pH_k(x,p)-D_pH_k(x,q)|}{|p-q|} .
 \]
Now Theorem \ref{st-uniq} gives the conclusion.
\end{proof}
\begin{rem}\upshape
The constant $\bar T$ in the Corollary depends only on $L_F, L_G, N, \a, C_F, C_G,$ $C'_G$,  $\max_k\| m_{0,k}\|_\infty$, $\Omega$ and the constants $C'_H, \bar C'_H$ built in the proof. A similar results holds if, instead of $T$ small, we assume $L_F$ and $L_G$ suitably small.
\end{rem}
\begin{ex} 
\label{regul2}
Costs satisfying the assumptions. \upshape
The nonlocal cost $F_k$ and $G_k$ of Example \ref{regul} satisfy Assumption \eqref{FGbdd} if, for instance,  
$K, \bar K$, and $g_i$ are 
bounded and $F_o$ is continuous.

The Assumption \eqref{G'bdd}  is verified if $g_1, g_2\in C^2(\ol\Omega)$ and $|D^2_x\bar K(x,y)|+|D^2_x\bar K(x,y)|\leq C$ for all $x, y$.

For the local cost $F_k$ of Example \ref{local},  \eqref{FGbdd} holds if $F^l_k$ is bounded.
 \end{ex}

 \subsection{Well-posedness of segregation models}
 
 Next we combine this uniqueness result with an existence theorem for models of urban settlements and residential choice proposed in \cite{ABC}. We take for simplicity
 \begin{equation}
\label{simple} 
N=2 , \quad G_k\equiv 0 ,\quad H_k(x,p)=h_k(x, |p|) .
\end{equation}
 We endow $\mathcal{P}_2(\Omega)$ with the Kantorovitch-Rubinstein distance and strengthen condition \eqref{FGbdd} to
  \begin{equation}
\label{strength} 
(F_1, F_2) : \overline\Omega\times \mathcal{P}_2(\Omega)\to \R^2 \; \text{ continuous and with bounded range in } C^{1,\b}(\ol\Omega),
\end{equation}
 for some $\b>0$. We also assume a compatibility condition and further regularity on $m_0$:
   \begin{equation}
\label{strength2} 
\de _n m_{0,k} = 0 \; \text{ on } \de\Omega , \quad m_{0,k}\in C^{2,\b}(\ol\Omega) , \quad k=1, 2 .
\end{equation}
 \begin{cor}
 \label{cor2}
Assume \eqref{ass:H}, 
 \eqref{Flip},  \eqref{growth}, \eqref{simple},  \eqref{strength},  \eqref{strength2}, and $H_k\in C^1(\ol\Omega\times \R^d)$. 
\noindent
Then there exists $\ol T>0$
such that for all $T\in (0, \bar T]$ there exists 
a unique classical solution of \eqref{mfg-syst}.
\end{cor}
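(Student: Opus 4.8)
The plan is to obtain the result by combining Corollary~\ref{cor1} (or rather the uniqueness part of Theorem~\ref{st-uniq} together with the a priori gradient bounds produced in its proof) with an existence theorem from \cite{ABC}. First I would check that the hypotheses \eqref{simple}, \eqref{strength}, \eqref{strength2} imply the ones needed for the existence result in \cite{ABC}: indeed \eqref{strength} gives that $F=(F_1,F_2)$ is continuous from $\ol\Omega\times\mathcal{P}_2(\Omega)$ into $C^{1,\b}(\ol\Omega)^2$ with bounded range, $G_k\equiv 0$ is trivially regular, $m_{0,k}\in C^{2,\b}(\ol\Omega)$ satisfies the compatibility condition $\de_n m_{0,k}=0$, the Hamiltonians $H_k(x,p)=h_k(x,|p|)$ are in $C^1$ and have the quadratic growth \eqref{growth}; under these conditions \cite{ABC} yields existence of at least one classical solution of \eqref{mfg-syst} for every horizon $T>0$.

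Next I would extract uniqueness. The subtlety is that Corollary~\ref{cor1} as stated requires \eqref{FGbdd} for $F$ and $G$ and \eqref{Glip}, \eqref{G'bdd} for $G$; here instead $G_k\equiv 0$, so \eqref{Glip}, \eqref{G'bdd} hold with $L_G=0$, $C_G=0$, $C'_G=0$, and \eqref{FGbdd} for $F$ follows from \eqref{strength} since a bounded set in $C^{1,\b}(\ol\Omega)$ is in particular uniformly bounded, giving some $C_F$. The Lipschitz condition \eqref{Flip} is assumed directly. Thus all hypotheses of Corollary~\ref{cor1} are met, and it produces a $\bar T>0$, depending only on the data, such that \eqref{mfg-syst} has at most one classical solution for every $T\in(0,\bar T]$.

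Combining the two, for $T\in(0,\bar T]$ there is existence (from \cite{ABC}, valid for all $T$) and uniqueness (from Corollary~\ref{cor1}), hence a unique classical solution, which is the assertion. I would phrase the proof as: ``Existence for every $T>0$ is Theorem~[...] of \cite{ABC}, whose hypotheses are guaranteed by \eqref{simple}--\eqref{strength2} and $H_k\in C^1$. Uniqueness for $T\le\bar T$ follows from Corollary~\ref{cor1}, applied with $L_G=C_G=C'_G=0$ and $C_F$ the bound coming from \eqref{strength}.''

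The main obstacle I anticipate is purely bookkeeping: making sure the precise regularity and compatibility hypotheses of the existence theorem in \cite{ABC} are exactly matched by \eqref{simple}, \eqref{strength}, \eqref{strength2} --- in particular that $H_k\in C^1$ plus the growth \eqref{growth} is what that theorem needs for the $C^{1}$-in-$t$, $C^{2}$-in-$x$ regularity of $v_k$ up to the boundary, and that the boundedness of the range of $F$ in $C^{1,\b}$ is what feeds the Schauder-type estimates there. No genuinely new analysis is required beyond quoting \cite{ABC} and Corollary~\ref{cor1}; the only care needed is that the $\bar T$ of Corollary~\ref{cor1} depends only on data (which the remark after Corollary~\ref{cor1} already records), so that the statement ``for all $T\in(0,\bar T]$'' is meaningful.
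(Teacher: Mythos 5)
Your proposal is correct and follows essentially the same route as the paper: existence for every $T>0$ from Theorem 12 of \cite{ABC}, uniqueness for $T\le\bar T$ from Corollary \ref{cor1} with $L_G=C_G=C'_G=0$ and $C_F$ supplied by \eqref{strength}. The one detail the paper makes explicit and you gloss over is that the compatibility condition actually required by the existence theorem is $\partial_n m_{0,k}+m_{0,k}D_pH_k(x,Dv_k(0,x))\cdot n=0$ on $\partial\Omega$, not merely $\partial_n m_{0,k}=0$; this reduces to the latter because \eqref{simple} gives $D_pH_k(x,p)=\partial_{|p|}h_k(x,|p|)\,p/|p|$, so the Neumann condition $\partial_n v_k=0$ kills the drift term on the boundary.
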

\begin{proof} The existence of a solution (for any $T$) follows from Theorem 12 of \cite{ABC}. Let us only note that, by \eqref{simple}, $D_pH_k(x,p)=\de_{|p|} h_k(x, |p|) p/|p|$, and then the compatibility condition in \eqref{strength2} and the Neumann condition for $v_k$ imply also the compatibility condition
   \begin{equation}
   \label{comp}
  \partial_n m_{0,k} + m_{0,k} D_p H_k(x, Dv_k(0,x)) \cdot n = 0  \quad \forall \, x\in\partial \Omega .
\end{equation}
The uniqueness of the solution for small $T$ follows from Corollary \ref{cor1}.
 \end{proof}
 \begin{rem}\upshape
Here the constant $\bar T$ depends on $L_F, \a, C_F,$ $\max_k\| m_{0,k}\|_\infty$, $\Omega$, and the constants $C'_H, \bar C'_H$ built in the proof of Corollary \ref{cor1}. 
The solution $m$ and $Dv$ depend in a Lipschitz way from the initial condition $m_0$, as explained in Remark \ref{contdip}.
\end{rem}
\begin{ex} 
\label{Sche} 
Costs of Schelling type. \upshape
Let $K_k : \ol\Omega \times \ol \Omega \to \R$ be Lipschitz and such that, for some 
$U(x)$  neighborhood of $x$, $K_k(x,y)=1$ for $y\in U(x)$ and $K_k(x,y)=0$ for $y$ out of a small neighborhood of $U(x)$. Then
\[
N_k(x,\mu_k
) := 
{\int_{\Omega} K_k(x,y) \mu_k(y) dy}
\]
represents the amount of population $k$ around $x$. The cost functional for the $k$-th population introduced in \cite{ABC} and inspired by the studies on segregation of T. Schelling \cite{Sch} is of the form 
\[
F_k(x,\mu_1,\mu_2
) :=\left(\frac{N_k(x,\mu_k)}{N_k(x,\mu_k) + N_{3-k}(x,\mu_{3-k}) + \eta} - a_k\right)^- ,
\]
where $(\;)^-$ denotes the negative part and $\eta>0$ is very small. It means that if the ratio of the $k$-th population with respect to the total population in the neighborhood of $x$ is above the threshold $a_k$, then a representative agent of this population is happy because his cost is 0, whereas below the threshold the agent incurs in a cost and therefore he wants to move from the neighborhood.  These costs fall within Example \ref{regul2} and satisfy \eqref{Flip} and \eqref{FGbdd}. Moreover $F_k : \Omega \times  \mathcal{P}_2(\Omega)\to \R$ is Lipschitz.

To meet the assumptions of Corollary \ref{cor2} we assume the kernel $K$ is of class $C^2$ in $x$ and we approximate the negative part $(\;)^-$ with a smooth function, e.g.,
\[
\varphi_\eps (r) := \frac{\sqrt{r^2 + \eps^2} - r}2 , 
\]
for a small $\eps>0$. Then the cost functionals
\[
F_k^\eps(x,\mu_1,\mu_2
) :=\varphi_\eps \left(\frac{N_k(x,\mu_k)}{N_k(x,\mu_k) + N_{3-k}(x,\mu_{3-k}) + \eta} - a_k\right) 
\]
satisfy also \eqref{strength}.
\end{ex}
\begin{ex} Hamiltonians.
 \upshape Typical examples are either $H _k(x,p)= b_k(x)|p|^2$, with $b_k\in C(\ol\Omega)$, or
 \[
H _k(x,p)=b_k(x)(1+|p|^2)^{\b_k/2} ,\quad 0<\b_k\leq 2 .
 \]
They satisfy \eqref{ass:H} and \eqref{growth}, moreover they are in $C^1(\ol\Omega\times \R^d)$ if $b_k\in C^1(\ol\Omega)$.
\end{ex}
%
 \begin{rem}
 \upshape
 In the last Corollary \ref{cor2} the simplifying assumption $G_k\equiv 0$ can be dropped and replaced with $G_k : \overline\Omega\times \mathcal{P}_2(\Omega)\to \R$ continuous, with bounded range in  $C^{2,\b}(\ol\Omega)$, and satisfying \eqref{Glip}. Then \eqref{G'bdd} holds and the constant $\bar T$ depends also on $ L_G, C_G,$ and $C'_G$. Examples of such terminal costs can be given along the lines of Examples \ref{regul}, \ref{regul2}, and \ref{Sche}.
 \end{rem}

 \subsection{Well-posedness of robust Mean Field Games}  
 For simplicity we limit ourselves to a single population of agents, so $N=1$ and we drop the subscripts $k$. The representative agent has the dynamics in $\R^d$
 \[
 dX_s = \left(f(X_s)+g(X_s)\a_s + \s(X_s) \b_s\right) ds + dW_s ,
 \]
 where $f$ is a $C^1$ 
  vector field in $\ol\Omega$, $g$ and $\s$ are  $C^1$ 
   scalar functions  in $\ol\Omega$, $W_s$ is a $d$-dimensional Brownian motion, $\a_s, \b_s$ take values in $\R^d$ and are, respectively, the control of the agent and a disturbance affecting the system. The cost functional is (for $\d>0$)
 \[
\mathbb E\left[ \int_0^T \left(F(X_s, m(s, \cdot)) + \frac{|\a_s|^2}2  -\d \frac{|\b_s|^2}2  \right) \, ds +G(X_T, m(T,\cdot)) \right]
 \]
 that the agent wants to minimise whereas the disturbance, modeled as a second player in a 2-person 0-sum game, wants to maximise. This leads to the Hamiltonian
\begin{equation}\label{Hrob}
 H (x,p)= -f(x)\cdot p + g^2(x)\frac{|p|^2}2  -   \s^2(x)\frac{|p|^2}{2\d} .
\end{equation}
Note that here $g(x)$ and $\sigma(x)$ are scalars, different 
 from Examples \ref{Bell} and \ref{Isa}.
  On the costs we assume
    \begin{equation}
\label{strength3} 
F, \, G : \overline\Omega\times \mathcal{P}_1(\Omega)\to \R \; \text{ continuous with bounded range, resp., in } C^{1,\b}(\ol\Omega) \text{ and  } C^{2,\b}(\ol\Omega)
\end{equation}
 for some $\b>0$. The compatibility condition and 
 regularity on $m_0$ now are
   \begin{equation}
\label{strength4} 
\de _n m_{0} - m_0 f\cdot n= 0 \; \text{ on } \de\Omega , \qquad m_{0}\in C^{2,\b}(\ol\Omega)  .
\end{equation}
 \begin{cor}
 \label{cor3}
Assume $N=1$ with the Hamiltonian defined by  \eqref{Hrob}, 
 \eqref{Flip},  \eqref{Glip},  \eqref{strength3},  and \eqref{strength4}. 
\noindent
Then for all $T>0$ there is 
a  classical solution of \eqref{mfg-syst}, and there exists $\ol T>0$
such that for all $T\in (0, \bar T]$ such solution is unique.
\end{cor}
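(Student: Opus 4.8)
The proof splits into existence of a classical solution for every $T>0$ and uniqueness for small $T$. The second part I would obtain directly from Corollary \ref{cor1}, so the bulk of the work is the existence statement, which I would prove by a Schauder fixed-point argument resting on the a priori estimates already available in the paper. For uniqueness, it suffices to check the hypotheses of Corollary \ref{cor1} with $N=1$ and $H$ as in \eqref{Hrob}. Here $D_pH(x,p)=-f(x)+(g^2(x)-\s^2(x)/\d)\,p$ is affine in $p$, hence, since $f,g,\s\in C^1(\ol\Omega)$, globally Lipschitz in $p$ uniformly in $x$: this is \eqref{ass:H}. A direct computation gives $|H(x,p)|\le\a(1+|p|^2)$ and $|D_pH(x,p)|(1+|p|)\le\a(1+|p|^2)$ with $\a$ depending only on $\|f\|_\infty,\|g\|_\infty,\|\s\|_\infty,\d$, which is \eqref{growth}. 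The bounded-range condition \eqref{strength3} yields both the uniform bounds $|F|\le C_F$, $|G|\le C_G$ of \eqref{FGbdd} and the bound $\|DG(\cdot,\mu)\|_\infty+\|D^2G(\cdot,\mu)\|_\infty\le C'_G$ of \eqref{G'bdd}; finally \eqref{Flip} and \eqref{Glip} are assumed, and $m_0\in\mathcal{P}_1(\Omega)$ comes with \eqref{strength4}. Hence Corollary \ref{cor1} applies and produces a $\bar T>0$, depending only on the data, such that \eqref{mfg-syst} has at most one classical solution for $T\le\bar T$.

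For existence I would run a fixed-point scheme in the spirit of the existence proofs in \cite{ABC, BBT, moonbasar16}, observing that it uses only the quadratic growth of $H$ in $Dv$ and not its convexity. Given $m$ in a suitable closed convex subset of $C([0,T];\mathcal{P}_1(\Omega))$, first solve the HJB equation in \eqref{mfg-syst} with datum $F(\cdot,m(t,\cdot))$: it is uniformly parabolic, semilinear, with quadratic growth in $Dv$, $C^{1,\b}$-bounded right-hand side and $C^{2,\b}$-bounded terminal datum, so the Comparison Principle in the Appendix gives $|v|\le C_G+TC_F$, and Theorem V.7.2 of \cite{LSU} (used exactly as in the proof of Corollary \ref{cor1}) yields a bound $|Dv|\le K$ together with H\"older bounds on $Dv$, all depending only on the data and on $T$. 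Then solve the KFP equation with the now frozen drift $D_pH(x,Dv)$, which is bounded and Lipschitz in $x$: one gets $\tilde m\ge 0$ with $\int_\Omega\tilde m\,dx=1$ and, by Proposition \ref{prop:est}, a uniform $L^\infty$ bound, and since $\partial_n v=0$ on $(0,T)\times\partial\Omega$ forces $D_pH(x,Dv)\cdot n=-f(x)\cdot n$ there, the compatibility condition \eqref{strength4} is precisely the one needed for $\tilde m$ to be a classical solution up to $t=0$ and on $\partial\Omega$. The map $m\mapsto\tilde m$ is continuous and, by the H\"older estimates, compact on the chosen convex set, so Schauder's theorem provides a fixed point, that is a classical solution of \eqref{mfg-syst} for every $T>0$.

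I expect the existence step to be the main obstacle: one must carry out the parabolic a priori estimates for the nonconvex quadratic Hamiltonian \eqref{Hrob} in the Neumann setting, checking in particular that the gradient bound of Theorem V.7.2 of \cite{LSU} uses only the quadratic structure and not convexity of $H$, and that \eqref{strength4} is compatible with the Neumann condition $\partial_n v=0$, so that full classical regularity up to the parabolic boundary holds and the fixed-point map is well defined. The uniqueness part, by contrast, is immediate once the hypotheses of Corollary \ref{cor1} have been verified as above.
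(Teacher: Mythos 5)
Your proof is correct, and the uniqueness half coincides with the paper's: the paper likewise reduces everything to Corollary \ref{cor1}, merely remarking that $H$ satisfies \eqref{ass:H}; your explicit verification that the affine $D_pH(x,p)=-f(x)+(g^2(x)-\s^2(x)/\d)p$ gives \eqref{ass:H} and \eqref{growth}, and that \eqref{strength3} gives \eqref{FGbdd} and \eqref{G'bdd}, fills in details the paper leaves implicit. The difference lies in the existence half: the paper does not re-run a fixed-point scheme but simply invokes Theorem 12 of \cite{ABC}, whose hypotheses are $H\in C^1(\ol\Omega\times\R^d)$ with quadratic growth plus the compatibility condition \eqref{comp}; the only thing actually checked is that \eqref{comp} follows from \eqref{strength4} together with the Neumann condition $\partial_n v=0$, via $D_pH(x,Dv)\cdot n=-f\cdot n+(g^2-\s^2/\d)\,Dv\cdot n=-f\cdot n$ on $\partial\Omega$ --- the same observation you make. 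Your Schauder sketch is essentially a re-derivation of that cited theorem: it buys self-containedness at the cost of carrying out the parabolic a priori estimates yourself, whereas the citation buys brevity and delegates exactly those estimates to \cite{ABC}. The concern you flag about whether the gradient bound of Theorem V.7.2 of \cite{LSU} needs convexity is resolved the same way the paper resolves it in the proof of Corollary \ref{cor1}: that estimate depends only on the structural growth conditions \eqref{growth} and the $L^\infty$ bound on $v$, not on convexity of $H$ in $p$, which is precisely why the nonconvex robust Hamiltonian \eqref{Hrob} is admissible. Either route is sound; if you keep the fixed-point version you should be explicit that you are reproving a known result rather than appealing to it.
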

\begin{proof} The existence of a solution 
 follows from Theorem 12 of \cite{ABC}. In fact, $H\in C^1(\ol\Omega\times\R^d)$ and it has quadratic growth. Moreover 
 $$
 D_pH(x,p)=-f(x) + g^2(x)p - \frac{\s^2(x)}{\d}p ,
 $$
  and then the compatibility condition in \eqref{strength4} and the Neumann condition for $v$ imply again the compatibility condition \eqref{comp}.

The uniqueness of the solution for small $T$ follows from Corollary \ref{cor1}, since $H$  satisfies also \eqref{ass:H}.
 \end{proof}
 \begin{rem}\upshape
Also here the solution $m$ and $Dv$ depend in a Lipschitz way from the initial condition $m_0$, as explained in Remark \ref{contdip}.
\end{rem}
 \begin{rem}  \upshape
Our example of robust MFG is different from the one in \cite{BBT}. In that paper the state space is $\Omega=\R$, one-dimensional  without boundary, the control system is linear in the state $X_s$, and the volatility is $\s X_s$ instead of 1, for some positive constant $\s$, so the parabolic operators in the HJB and KFP equations of \eqref{mfg-syst} are degenerate at the origin. The well-posedness of the MFG system of PDEs in \cite{BBT} is an open problem.
  \end{rem}

\section{Appendix: a Comparison Principle}\label{app}
The next result is known but we give its elementary proof for lack of a precise reference. 
\begin{prop}
Assume $\Omega\subseteq\R^d$ is bounded with $C^2$ boundary, $H : \ol\Omega \times \R^d$ is of class $C^1$ with respect to $p$,  and $u, v : [0,T]\times \ol\Omega \to \R$ are $C^1$ in $t$ and $C^2$ in $x$ and satisfy
\begin{equation*}
\left\{
\begin{array}{ll}
- \partial_t u -  \Delta u + H(x,Du) \leq - \partial_t v -  \Delta v + H(x,Dv)  , & \textit{in }  (0,T)\times\Omega , \\ \\
\partial_n u \leq \partial_n v ,   & \textit{on }  (0,T)\times\partial \Omega , \\ \\
u(T,x) \leq v(T,x)  & \textit{in } \Omega.
\end{array}
\right.
\end{equation*}
Then $u \leq v$ in $[0,T]\times\ol\Omega$.
\end{prop}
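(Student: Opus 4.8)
The plan is to prove this comparison principle by a standard argument: reduce to the case of strict inequality by perturbation, then localize at an interior or boundary maximum of $u-v$ and derive a contradiction from the differential inequality and the boundary condition.

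First I would set $w := u - v$ and note that $w$ is $C^1$ in $t$ and $C^2$ in $x$ on $[0,T]\times\ol\Omega$, with $w(T,x)\le 0$ in $\Omega$ and $\partial_n w\le 0$ on $(0,T)\times\partial\Omega$. From the first inequality, writing $H(x,Du)-H(x,Dv) = b(t,x)\cdot Dw$ where $b(t,x) := \int_0^1 D_pH(x, Dv + s\,Dw)\,ds$ (this is where the $C^1$ regularity of $H$ in $p$ is used, together with the boundedness of $Du,Dv$ on the compact set $[0,T]\times\ol\Omega$, so $b$ is bounded), we get
\[
-\partial_t w - \Delta w + b(t,x)\cdot Dw \le 0 \quad \text{in } (0,T)\times\Omega .
\]
So $w$ is a subsolution of a linear parabolic equation with bounded coefficients, backward in time, satisfying $\partial_n w \le 0$ on the lateral boundary and $w(T,\cdot)\le 0$ at the terminal time; the claim $w\le 0$ is then a maximum principle for such equations.

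Next I would run the perturbation trick. Fix $\lambda > 0$ large (to be chosen depending on $\|b\|_\infty$) and set $w_\varepsilon(t,x) := w(t,x) - \varepsilon e^{\lambda(T-t)}$ for $\varepsilon>0$. Then $w_\varepsilon(T,\cdot)\le -\varepsilon < 0$, $\partial_n w_\varepsilon = \partial_n w \le 0$ on the lateral boundary, and a direct computation gives
\[
-\partial_t w_\varepsilon - \Delta w_\varepsilon + b\cdot Dw_\varepsilon \le -\lambda\varepsilon e^{\lambda(T-t)} < 0 \quad \text{in } (0,T)\times\Omega .
\]
Suppose, for contradiction, that $\sup_{[0,T]\times\ol\Omega} w_\varepsilon > 0$. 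Since $w_\varepsilon < 0$ on $\{T\}\times\Omega$ and $w_\varepsilon$ is continuous, the positive supremum is attained at some point $(t_0,x_0)$ with $t_0 < T$; moreover $t_0 > 0$ is not an issue because at a maximum with $t_0>0$ we have $\partial_t w_\varepsilon(t_0,x_0) \le 0$ is not automatic—rather, I should take the maximum over $[0,T]\times\ol\Omega$ and handle $t_0$ carefully (at an interior-in-time maximum $\partial_t w_\varepsilon = 0$; at $t_0 = 0$, $\partial_t w_\varepsilon(0,x_0)\ge 0$; in either case $-\partial_t w_\varepsilon(t_0,x_0) \le 0$). There are two cases for $x_0$. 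If $x_0 \in \Omega$ (interior), then $Dw_\varepsilon(t_0,x_0) = 0$ and $\Delta w_\varepsilon(t_0,x_0) \le 0$, so the left-hand side of the differential inequality at $(t_0,x_0)$ is $\ge -\partial_t w_\varepsilon(t_0,x_0) \ge 0$, contradicting strict negativity. If $x_0 \in \partial\Omega$, then the outward normal derivative $\partial_n w_\varepsilon(t_0,x_0) \ge 0$ since $w_\varepsilon$ increases toward its boundary maximum; combined with $\partial_n w_\varepsilon \le 0$ this forces $\partial_n w_\varepsilon(t_0,x_0) = 0$, and then by the $C^2$ regularity and the $C^2$ boundary one can still conclude the relevant second-order and first-order information to reach a contradiction — this is the delicate point.

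The main obstacle is precisely the boundary case: at a maximum located on $\partial\Omega$, vanishing of the normal derivative does not immediately give $Dw_\varepsilon = 0$ there, and the inward second-order behavior of $w_\varepsilon$ is constrained only in tangential directions. The clean way around this is to flatten the boundary near $x_0$ by a $C^2$ diffeomorphism (using that $\partial\Omega \in C^2$): in the new coordinates $\Omega$ becomes locally a half-space $\{y_d > 0\}$, the equation keeps the same structural form (uniformly elliptic second-order part with bounded lower-order coefficients, the cross terms and the transported $b$ remaining bounded), the Neumann condition $\partial_n w_\varepsilon\le 0$ transforms into $-\partial_{y_d}\tilde w_\varepsilon + (\text{tangential})\le 0$ up to lower-order terms, and one reduces to the half-space maximum principle where a Hopf-type / reflection argument applies cleanly. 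Alternatively — and more elementarily, which matches the paper's stated aim — one can avoid charts by using a barrier: near a boundary maximum point add a small multiple of a function like $-\mu\, d(x)$ or use the test function $\psi(x) = e^{-\gamma d(x)}$ adapted to the Neumann condition, so that the perturbed function has its maximum strictly inside $\Omega$, reducing everything to the interior case already handled. I would present the barrier version, choosing the barrier so that its normal derivative has a favorable sign, which combined with $\partial_n w_\varepsilon \le 0$ pushes the maximum into the interior; then let $\varepsilon \downarrow 0$ to conclude $w \le 0$, i.e. $u \le v$ on $[0,T]\times\ol\Omega$.
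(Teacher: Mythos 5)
Your plan is essentially the paper's argument: reduce to strict inequalities, observe that a strict terminal inequality and a strict parabolic inequality rule out maxima of $u-v$ in $[0,T)\times\Omega$, and handle the boundary by a barrier whose outward normal derivative has a favorable sign, so that a maximum on $(0,T)\times\partial\Omega$ would force $\partial_n(u-v_\eps)\geq 0$ and contradict the (now strict) Neumann inequality. You correctly identify the boundary case as the only delicate point, and the barrier route you finally choose is the right one; the chart/flattening alternative is unnecessary here.

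What you leave undone is precisely the step you flag as delicate, and it is worth noting that it does not require any local analysis at the boundary maximum (no Hopf lemma, no reflection): since $\partial\Omega$ is $C^2$ one takes a single global $g\in C^2(\ol\Omega)$ with $Dg=n$ on $\partial\Omega$ (e.g.\ a smooth extension of minus the distance function) and perturbs by $v_\eps:=v+\eps(T-t)C+\eps g(x)$ with \emph{the same} $\eps$ in both terms. Then $\partial_n(u-v_\eps)=\partial_n(u-v)-\eps<0$ kills boundary maxima outright, $(u-v_\eps)(T,\cdot)\leq\eps\|g\|_\infty$, and the only thing to check is that the barrier does not destroy the strict differential inequality: Taylor expansion of $H(x,Dv_\eps)$ around $Dv$ produces an error $-\eps(C-\Delta g+D_pH(x,q)\cdot Dg)$ with $|q|$ controlled, which is $<-\eps$ once $C$ is chosen large depending on $\|\Delta g\|_\infty$, $\|Dg\|_\infty$ and the (finite, by compactness and $C^1$ regularity of $H$ in $p$) bound on $D_pH$ over the relevant gradient range. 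In your version the two perturbations carry independent parameters ($\eps e^{\lambda(T-t)}$ and a separate $\mu$ for the barrier), and you never verify that the barrier's contribution to the equation is absorbed by the $-\lambda\eps e^{\lambda(T-t)}$ term; this works only if the two coefficients are coupled, which is exactly what the paper's choice of a single $\eps$ and a large constant $C$ accomplishes. Finally, the linearization $H(x,Du)-H(x,Dv)=b\cdot Dw$ you introduce at the start is harmless but not needed: at an interior spatial maximum $Du=Dv$, so the Hamiltonian terms cancel without any mean-value argument.
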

\begin{proof}
Let us assume first that 
\[
- \partial_t (u-v) -  \Delta (u-v) + H(x,Du) - H(x, Dv) < 0 \quad\text{ in } [0,T)\times\Omega ,
\]
 $\partial_n (u-v) <0\;$  {on }  $[0,T)\times\partial \Omega$, and $(u-v)(T,x) \leq \d$. Then the maximum of $u-v$ can be attained only at $t=T$, which implies $u-v
 \leq \d$ in  $[0,T]\times\ol\Omega$.
 
 Now take $g\in C^2(\ol\Omega)$ such that $Dg(x)=n(x)$ for all $x\in\de\Omega$ and define
 \[
 v_\eps(t,x):= v(t,x)+\eps(T-t)C+\eps g(x).
 \]
 Then $\;\partial_n (u-v_\eps)=\partial_n (u-v) -\eps <0\;$ and $\;(u-v_\eps)(T,x) \leq \eps\|g\|_\infty$. Moreover, by Taylor's formula, for some $q$ with $|q|\leq \|Dg\|_\infty$, 
 \begin{multline*}
 - \partial_t (u-v_\eps) -  \Delta (u-v_\eps) + H(x,Du) - H(x, Dv_\eps) = \\
 - \partial_t (u-v) -  \Delta (u-v) + H(x,Du) - H(x, Dv)  -\eps (C -\Delta g+D_pH(x,q)\cdot Dg)<-\eps
 \end{multline*}
 if $C$ is chosen large enough. Then
 \[
 u\leq v_\eps +  \eps\|g\|_\infty \leq v +\eps(TC+ 2\|g\|_\infty) 
 \]
 and we conclude by letting $\eps \to 0$.
\end{proof}
\begin{rem}\upshape
The result remains true if $\de \Omega$ is merely $C^1$ and satisfies an interior sphere condition. 
This can be proved in a less direct way by 
linearizing the inequality for $u-v$ and then using 
 the parabolic Strong Maximum Principle and the parabolic version of Hopf's Lemma for linear equations (see, e.g., \cite{PW}).
\end{rem}


\begin{thebibliography}{13}
\bibitem{ABC}
Y. Achdou, M. Bardi, M. Cirant: {Mean Field Games models of segregation}, Math. Models Methods Appl. Sci. 27 (2017), 
 75--113,
\bibitem{Amb}
D.M. Ambrose: Strong solutions for time-dependent mean field games with non-separable Hamiltonians, preprint  arXiv:1605.01745 

\bibitem{B} M. Bardi: Explicit solutions of some Linear-Quadratic Mean Field Games, Netw. Heterog. Media 7 (2012), 243--261

\bibitem{BFel} M. Bardi, E. Feleqi: Nonlinear elliptic systems and mean field games,  NoDEA Nonlinear Differential 
Equations Appl. 23 (2016), 
 23--44

\bibitem{BF} M. Bardi, M. Fischer: On non-uniqueness and uniqueness of solutions in some finite-horizon Mean Field Games, 
preprint arXiv 1707.00628

\bibitem{BP} M. Bardi, F.S. Priuli: Linear-Quadratic $N$-person and Mean-Field Games with Ergodic Cost, SIAM J.  Control Optim. 52 (2014), 3022--3052.
 
\bibitem{BBT} D. Bauso, H. Tembine, T.  Basar: Robust mean field games. Dyn. Games Appl.  6  (2016),  
 277--303
 

\bibitem{BriCar} A. Briani, P. Cardaliaguet: Stable solutions in potential mean field game systems, preprint 2016, arXiv:1612.01877 

\bibitem{CaDe} R. Carmona, F. Delarue: Probabilistic Theory of Mean Field Games with Applications, Springer 2017.

\bibitem{Car} P. Cardaliaguet: Notes on Mean Field Games (from P-L. Lions' lectures at Coll\`ege de France), 2010.

\bibitem{CarPoTo} P. Cardaliaguet, A. Porretta, D. Tonon: A segregation problem in multi-population Mean Field Games, preprint 2016.

\bibitem{Cir} M. Cirant: Multi-population mean field games systems with Neumann boundary conditions, J. Math. Pures Appl.  103 (2015), 1294--1315.

\bibitem{Cir16} M. Cirant: Stationary focusing Mean-Field Games, Comm. Partial Differential Equations.  41(2016) 1324--1346.

\bibitem{CT} M. Cirant, D. Tonon: Time-dependent focusing Mean-Field Games: the sub-critical case, preprint 2017,  arXiv:1704.04014

\bibitem{CirVe} M. Cirant, G. Verzini: Bifurcation and segregation in quadratic two-populations mean field games systems, ESAIM Control Optim. Calc. Var.  23 (2017), 1145--1177.

 
 \bibitem{GMS}
D.A. Gomes, J.~Mohr, and R.R. Souza.
\newblock Continuous time finite state 
 mean field games.
 Appl. Math. Optim. 68 (2013), 
 99--143.
 
  \bibitem{GomesBookEcon} D.~Gomes, L. Nurbekyan and E.~Pimentel: Economic models and mean-field games theory, 
IMPA Mathematical Publications, 
Instituto Nacional de Matem\'atica Pura e Aplicada
, Rio de Janeiro, 2015.

  \bibitem{GNPraz} D.~Gomes, L. Nurbekyan and M. Prazeres: One-dimensional stationary mean-field games with local coupling, Preprint arXiv:1611.08161, to appear in Dyn. Games Appl.
  
\bibitem{GomesBook}
D.~Gomes, E.~Pimentel, and V.~Voskanyan,
\newblock {\it Regularity theory for mean-field game systems},
\newblock Springer 2016.

\bibitem{GS} D. Gomes, J. Saude:   
Mean field games models: a brief survey. 
Dyn. Games Appl. 4 (2014), 
 110--154

\bibitem{Gu} O. Gu\'eant, A reference case for mean field games models, J. Math. Pures Appl. (9), 92 (2009), 276--294.

\bibitem{GLL} O. Gu\'eant, J.-M.~Lasry, P.-L.~Lions: Mean field games and applications, in "Paris-Princeton Lectures on Mathematical Finance 2010" (eds. R. A. Carmona, et al.), Lecture
Notes in Math., 2003, Springer, Berlin, (2011), 205--266.


\bibitem{HCM:06} M. Huang, R.P. Malham{\'e} and P.E. Caines, Large population stochastic dynamic games: closed-loop McKean-Vlasov systems and the Nash certainty equivalence principle, {\it Commun. Inf. Syst.} {\bf 6} 
 (2006) 221--251.

\bibitem{HCM:07ieee} M. Huang, P.E. Caines and R.P. Malham{\'e}, Large-population cost-coupled LQG problems with nonuniform agents: individual-mass behavior and decentralized $\epsilon$-Nash equilibria, {\it IEEE Trans. Automat. Control} {\bf 52} 
(2007) 1560--1571.

\bibitem{HCM:07jssc} M. Huang, P.E. Caines and R.P. Malham{\'e}, An invariance principle in large population stochastic dynamic games. J. Syst. Sci. Complex. 20 (2007), 
 162--172. 

\bibitem{LSU} O.A. Ladyzenskaja, V.A. Solonnikov, N.N. Uralceva: Linear and quasilinear equations of parabolic type. 
Translations of Mathematical Monographs, Vol. 23 American Mathematical Society, Providence, R.I. 1968

  \bibitem{LLcras1}
J.-M.~Lasry, P.-L.~Lions: Jeux à champ moyen.   I. Le cas stationnaire, C. R. Math. Acad. Sci. Paris 343 (2006), 
 619--625
   
  \bibitem{LLcras}
J.-M.~Lasry, P.-L.~Lions: Jeux à champ moyen. II. Horizon fini et controle optimal, C. R. Math. Acad. Sci. Paris 343 (2006), 679--684.

\bibitem{LL}
J.-M.~Lasry, P.-L.~Lions: Mean field games, Jpn. J.  Math. \textbf{2} (2007), 
   229--260. 
   \bibitem{Lieb}  G.M. Lieberman: Second order parabolic differential equations. World Scientific Publishing Co., Inc., River Edge, NJ, 1996
   
   \bibitem{L}
P.-L.~Lions: Lectures at Coll\`ege de France 2008-9.
 
  
  \bibitem
{moonbasar16}
J.~Moon and T.~Ba{\c{s}}ar.
\newblock Linear quadratic risk-sensitive and robust mean field games.
\newblock \emph{IEEE Trans. Automat. Control}, 62\penalty0 (3):\penalty0 1062--1077, 2016.

  \bibitem{PW}   M.H. Protter, H.F. Weinberger: Maximum principles in differential equations. Prentice-Hall, Inc., Englewood Cliffs, N.J. 1967.
 
  \bibitem{Sch} T.~C. Schelling:  Micromotives and Macrobehavior, Norton, 1978  
  
   \bibitem{T} H.V. Tran: A note on nonconvex Mean Field Games, preprint 2016, arXiv:1612.04725
   
   \bibitem
{wangzhang12}
B.-C.~Wang and J.-F.~Zhang.
\newblock Mean field games for large-population multiagent systems with Markov jump parameters.
\newblock \emph{SIAM J. Control Optim.}, 50\penalty0 (4):\penalty0 2308--2334, 2012.
   
      \end{thebibliography}
\end{document}